\documentclass[11pt]{article}

\usepackage{graphicx}
\usepackage{url}
\usepackage{amsfonts}
\usepackage{amsmath}
\usepackage{amsthm}
\usepackage{amssymb}
\usepackage{url}
\usepackage{subfig}
\usepackage{hyperref}
\usepackage{fullpage}

%

\allowdisplaybreaks

\newtheorem{theorem}{Theorem}

\newtheorem{lemma}[theorem]{Lemma}

\providecommand{\seg}[1]{|#1|}
\def\DEF#1{\emph{#1}}

\title{The intersection graph of the disks with diameters\\
the sides of a convex $n$-gon\footnote{The partial solution for the convex pentagon appeared at the XVI Spanish Meeting on Computational Geometry, Barcelona, Spain, 2015.}}

\author{
Clemens Huemer\thanks{Departament de Matem\`atica Aplicada IV, Universitat Polit\`ecnica de Catalunya, Barcelona, Spain.
{\tt clemens.huemer@upc.edu}.} 
\and
Pablo P\'erez-Lantero\thanks{Departamento de Matem\'atica y Ciencia de la Computaci\'on, Universidad de Santiago, Santiago, Chile.
{\tt pablo.perez.l@usach.cl}.}
}


\begin{document}
\maketitle

\begin{abstract}
Given a convex polygon of $n$ sides, one can draw $n$ disks (called side disks)
where each disk has a different side of the polygon as diameter 
and the midpoint of the side as its center. The intersection graph
of such disks is the undirected graph with vertices the $n$ disks and
two disks are adjacent if and only if they have a point in common.
We prove that for every convex polygon this graph is planar.
Particularly, for $n=5$, this shows that for any convex pentagon there are two disks
among the five side disks that do not intersect, which means that $K_5$ is never the intersection graph of such five disks.
For $n=6$, we then have that for any convex hexagon the intersection graph of the side disks 
does not contain $K_{3,3}$ as subgraph.
\end{abstract}

\section{Introduction}

Let $P_n$ be a convex polygon of $n$ sides denoted $s_0,s_1,\ldots,s_{n-1}$ counter-clockwise. 
For each side $s_i$, let $D_i$ denote the disk with diameter the length of $s_i$ and center
the midpoint of $s_i$. Since $D_i$ is constructed on the side $s_i$ of $P_n$, we say
that $D_i$ is a \DEF{side disk} of $P_n$. The \DEF{intersection graph} of the side disks
$D_0,D_1,\ldots,D_{n-1}$ is the undirected graph $G=\langle V,E\rangle$, where 
$V=\{D_0,D_1,\ldots,D_{n-1}\}$ and $\{D_i,D_j\}\in E$ if and only if 
the intersection of $D_i$ and $D_j$ is not empty. We prove in this paper that for any convex
polygon the intersection graph of the side disks is planar.


Results on disjoint and intersecting disks in the plane are among the most classical ones in convex geometry. Helly's theorem, see e.g.~\cite{wenger97}, when stated for disks in the plane, tells us that if any three of a given family of $n$ disks intersect, then all $n$ disks intersect. A theorem of Danzer~\cite{danzer} says that if any two of a given
family of $n$ disks intersect, then there exists a set of four points which intersects each disk. We refer to the survey of Wenger for related results~\cite{wenger97}. 
Intersections of disks have also been considered in the context of intersection graphs: each disk represents a vertex of the graph and two vertices are adjacent if and only if the corresponding disks intersect. By the Koebe-Andreev-Thurston theorem~\cite{koebe}, every planar graph is an intersection graph of disks, where every pair of intersecting disks have only one point in common, that is, they are tangent.
In our problem, the disks 
are in special position and we prove that their intersection graph is planar.

Results relating convex polygons and disks (or circles) can also be found in the literature. Given a polygon with $n$ vertices 
$A_1,A_2,\ldots,A_n$, a sequence $D_1,D_2,D_3,\dots$ of disks can be built as follows: $D_1$
is inscribed in the angle $A_1$, $D_2$ is the smallest one inscribed in the angle $A_2$ and tangent to $D_1$,
$D_3$ is the smallest one inscribed in the angle $A_3$ and tangent to $D_2$, etc. The Money-Coutts
theorem~\cite{evelyn1974seven}, also known as the {\em six circles theorem}, 
states that for every triangle $A_1,A_2,A_3$ there exists such a sequence of disks that is 6-periodic.
In general, if at least one of the tangency points of the initial disk $D_1$ lies on a side of the triangle,
then the sequence of disks is eventually 6-periodic~\cite{ivanov2013six}.
A similar 6-periodic sequence of disks can be built with respect to three given circles in general
position instead of a triangle~\cite{tyrrell1971theorem}. In this scenario, Tabachnikov~\cite{tabachnikov2000going}
described a class of convex $n$-gons for which such a sequence is $2n$-periodic. 
Troubetzkoy~\cite{troubetzkoy2000circles} showed that for parallelograms such sequences are preperiodic, with eventual period 4, and that they are chaotic for some class of quadrilaterals. Also well known
are Malfatti's problem which given a triangle asks for three non-overlapping disks of maximum
total area contained in the triangle~\cite{zalgaller1994solution}, and the 
{\em seven circles theorem}~\cite{evelyn1974seven} that given a closed chain of six circles all tangent to, and enclosed by, 
a seventh circle and each tangent to its two adjacent ones, states that the three lines drawn between the opposite pairs of tangency points on the seventh circle are concurrent.
  
The problem studied here, posed in~\cite{irracional} for the case of a pentagon, turned out to be non-trivial to solve. To give some insight for this particular case of a pentagon, let us consider the example in Figure~\ref{fig:approach}. The two disks corresponding to sides $AB$ and $CD$ do not intersect. This is equivalent to saying that the distance between the midpoints $M_{AB}$ and $M_{CD}$ of the segments $AB$ and $CD$, respectively, is larger than the sum of the radii of the two disks, equal to half of the sum of the lengths of $AB$ and $CD$. Thus, a first natural approach is trying to prove that the sum of the five distances between the midpoints (that is, the dotted edges in Figure~\ref{fig:approach}) is bigger than the perimeter of the pentagon. But this is not always the case: for example, consider the convex pentagon with vertices at coordinates $(1,9)$, $(0,3)$, $(0,-3)$, $(1,-9)$, and $(60,0)$. Another quite natural approach is to connect an interior point $P$ of the pentagon with the five vertices and consider the angles at $P$.  It follows from Thales' Theorem that $P$ lies in the disk with, say, diameter $AE$ if and only if the angle $\angle APE$ is at least $\pi/2$. In Figure~\ref{fig:approach2}, $P$ lies outside the disks with diameters $AB$, $BC$, and $DE$. Clearly, no point $P$ lies in more than three of the five disks, since otherwise the five angles around $P$ would sum more than $2\pi.$ One could then use a fractional version of Helly's theorem (Theorem 12 in~\cite{wenger97}), which states that if among all the 10 triples of the five disks, more than 6 triples have a point in common, then there exists a point contained in 4 disks. We conclude that there are at least 4 triples of disks without a common intersection. However, it remained elusive to us to solve this particualar case with a Helly-type approach.

\begin{figure}[t]
	\centering
	\subfloat[]{
		\includegraphics[scale=0.8,page=9]{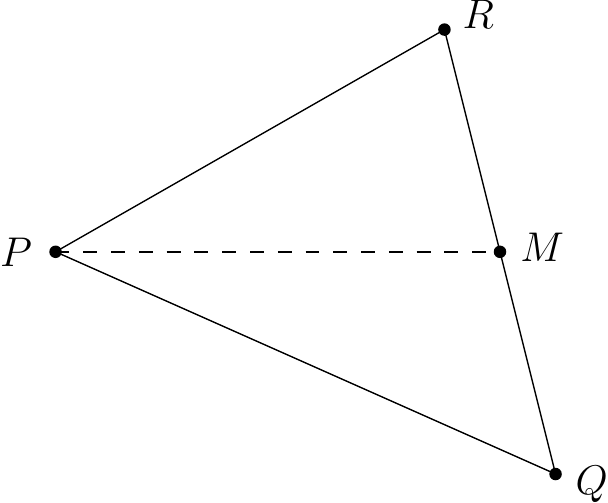}
		\label{fig:approach}
	}
	\subfloat[]{
		\includegraphics[scale=0.8,page=10]{img.pdf}
		\label{fig:approach2}
	}
	\caption{\small{
		(a) Two disjoint disks with diameters $AB$ and $CD$.
		(b) $P$ lies outside the disks with diameters $AB$, $BC$, and $DE$.
	}}
	\label{fig:base-lemmas2}
\end{figure}

\paragraph*{Notation:}
Given three different points $p$, $q$, and $r$ in the plane,
let $\ell(p,q)$ denote the straight line containing both $p$ and $q$,
$pq\subset \ell(p,q)$ the segment with endpoints $p$ and $q$, 
$h(p,q)$ the halfline 
emanating from $p$ and containing $q$,
$\Delta pqr$ the triangle with vertex set $\{p,q,r\}$,
and $\angle pqr$ the angle not bigger than $\pi$ with vertex
$q$ and sides $h(q,p)$ and $h(q,r)$.
For a line $\ell$, let $dist(p,\ell)$ denote the distance
from $p$ to $\ell$.
Given a segment $s$, let $\seg{s}$ denote the length of $s$, $\ell(s)$ the line
that contains $s$, and 
$D_s$ the disk that has diameter $\seg{s}$ and center the midpoint of $s$.
We say that a (convex) quadrilateral is
{\em tangential} if each of its sides is tangent to the same given
disk contained in the quadrilateral. 
Every time we define a polygon
by enumerating its vertices, the vertices are given in counter-clockwise order. We will also
refer to a polygon by giving a sequence of its vertices in counter-clockwise order.



\section{Preliminaries}\label{sec:pre}

Let $s_0,s_1,\ldots,s_{n-1}$ denote in counter-clockwise order the sides of a convex polygon $P_n$. Let $D_0,D_1,\ldots,D_{n-1}$ be the side disks of $P_n$ at $s_0,s_1,\ldots,s_{n-1}$, respectively, and $G=\langle V,E\rangle$ the intersection graph of $D_0,D_1,\ldots,D_{n-1}$. Note that $\{D_i,D_{i+1}\}\in E$ for every $i\in \{0,1,\ldots,n-1\}$, where subindices are taken modulo $n$. Then, $G$ is Hamiltonian, with the cycle $c=\langle D_0,D_1,\ldots,D_{n-1},D_0\rangle$. 

Any Hamiltonian graph $G=\langle V,E\rangle$ with a Hamiltonian cycle $\langle v_0,v_1,\ldots,v_{n-1},v_0\rangle$ can be embedded in the plane as follows: the vertices $V=\{v_0,v_1,\ldots,v_{n-1}\}$ are different points of the unit circle so that the edges of the cycle are the circular arcs between consecutive points, and any other edge $\{v_i,v_j\}\in E$ is the straight chord of the circle, denoted $\mathtt{c}_{i,j}$, that connects the points representing $v_i$ and $v_j$, respectively. We call such an embedding as the \DEF{circular embedding} of $G$.  The chords induce the intersection graph $G_{\mathtt{c}}=\langle V_{\mathtt{c}}, E_{\mathtt{c}}\rangle$, where $V_{\mathtt{c}}$ is the set of chords, and $\{\mathtt{c}_{i,j},\mathtt{c}_{k,\ell}\}\in E_{\mathtt{c}}$ if and only if the chords $\mathtt{c}_{i,j}$ and $\mathtt{c}_{k,\ell}$ have an interior point in common. Observe that subindices $i$, $j$, $k$, and $ \ell$ must be different. See Figure~\ref{fig:examples} for examples.

\begin{figure}[t]
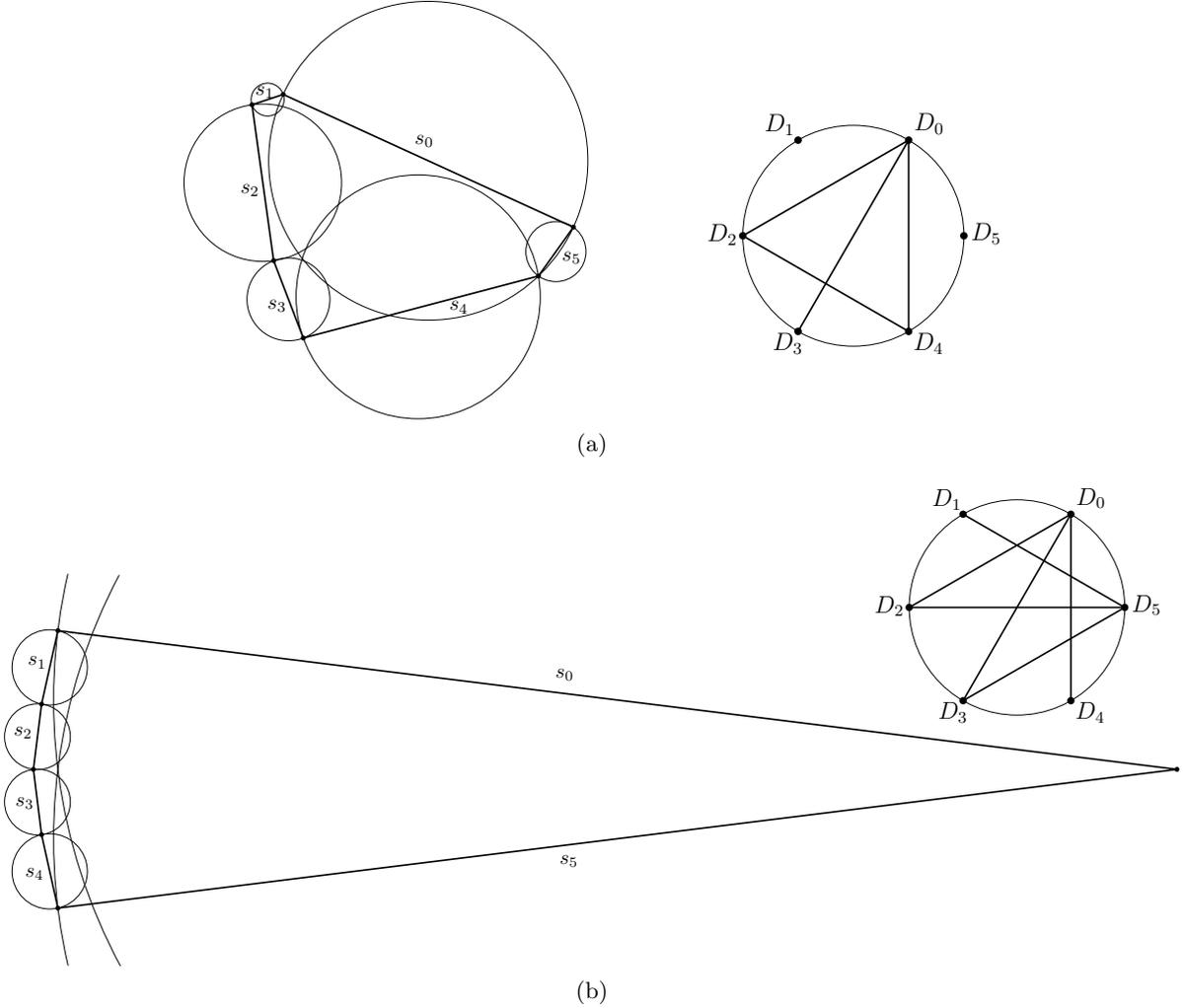

	\centering
	\subfloat[]{
		\includegraphics[scale=0.8,page=23]{img.pdf}
		\label{fig:example1}
	}\\
	\subfloat[]{
		\includegraphics[scale=0.78,page=24]{img.pdf}
		\label{fig:example2}
	}
	\caption{\small{
		Two examples of an hexagon withs sides $s_0,s_1,\ldots,s_5$,
		together with the circular embedding of the intersection graph of the side disks.
	}}
	\label{fig:examples}
\end{figure}

Kuratowski and Wagner theorems are well known characterizations of planar graphs, but they are oriented to general graphs. In our particular case, we consider the condition that the intersection graph of the side disks of a convex polyogn has a Hamiltonian cycle, and use the following characterization instead, which is due to Hundack and Stamm-Wilbrandt~\cite{hundack1994planar}.

\begin{theorem}[\cite{hundack1994planar}]\label{theo:hamiltonian-planar}
Let $G=\langle V,E\rangle$ be a Hamiltonian graph, and $G_{\mathtt{c}}=\langle V_{\mathtt{c}}, E_{\mathtt{c}}\rangle$ the intersection graph of the chords in the circular embedding of $G$. Then, $G$ is planar if and only if $G_{\mathtt{c}}$ is bipartite.
\end{theorem}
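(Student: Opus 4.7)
The plan is to prove both directions by exploiting the topological fact that, once drawn in the plane, a Hamiltonian cycle separates the plane into two regions via the Jordan curve theorem. Each non-cycle edge in a planar embedding of $G$ must lie entirely in one of these regions, and the induced partition of non-cycle edges should correspond precisely to a bipartition of $G_\mathtt{c}$.

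For the necessity direction, I would start from a planar embedding of $G$ and consider the simple closed curve $C$ that is the image of the Hamiltonian cycle $\langle v_0,v_1,\ldots,v_{n-1},v_0\rangle$. Each non-cycle edge is a simple arc between two points of $C$ that is disjoint from the rest of $C$, so it lies entirely in the interior or entirely in the exterior of $C$. Coloring every vertex of $G_\mathtt{c}$ with the label of the region containing the corresponding chord yields a candidate 2-coloring. The key claim is that it is proper: if two chords $\mathtt{c}_{i,j}$ and $\mathtt{c}_{k,\ell}$ are adjacent in $G_\mathtt{c}$, their endpoints alternate cyclically along $C$, so the closed curve obtained by concatenating $\mathtt{c}_{i,j}$ with one of the two arcs of $C$ between $v_i$ and $v_j$ separates $v_k$ from $v_\ell$. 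Since the arc realizing $\mathtt{c}_{k,\ell}$ is disjoint from $C$, if it were drawn in the same region as $\mathtt{c}_{i,j}$ it would be forced to cross $\mathtt{c}_{i,j}$, contradicting planarity. Hence the 2-coloring is proper and $G_\mathtt{c}$ is bipartite.

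For the sufficiency direction, I would take a bipartition $V_\mathtt{c}=A\sqcup B$ and build a planar embedding explicitly by using a two-page book embedding. Draw $C$ as the unit circle with the vertices in the prescribed cyclic order, realize every chord in $A$ as a curve inside the disk, and every chord in $B$ as a curve in the exterior region. Any two chords with the same color are non-adjacent in $G_\mathtt{c}$, so by the definition of $G_\mathtt{c}$ their endpoints do not interleave on $C$; the standard fact that a family of chords with pairwise non-interleaving endpoints can be drawn simultaneously inside a disk without crossings (for instance, as straight segments) then furnishes crossing-free drawings in each region, and gluing them along $C$ produces a planar embedding of $G$.

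The main obstacle is the Jordan-curve argument in the necessity direction: one must verify carefully that the closed curve formed by $\mathtt{c}_{i,j}$ together with a single arc of $C$ really does separate $v_k$ from $v_\ell$, which uses the alternating order of the four endpoints on $C$ together with the fact that the arc $\mathtt{c}_{k,\ell}$ cannot cross $C$ except at its own endpoints. The sufficiency direction, once the non-interleaving lemma is invoked, is essentially the standard correspondence between two-page book embeddings and 2-colorings of conflict graphs of chords, and requires no further geometric work beyond the Jordan curve theorem applied to $C$.
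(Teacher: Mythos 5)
Your proof is correct. Note that the paper does not prove this statement at all: it is quoted as a known result of Hundack and Stamm-Wilbrandt with a citation, so there is no in-paper argument to compare against. What you give is the standard two-page/book-embedding proof, and both directions are sound. The only step I would tighten is the separation claim in the necessity direction: rather than saying that $\mathtt{c}_{i,j}$ concatenated with one arc of $C$ ``separates $v_k$ from $v_\ell$'' (one of these two vertices may lie on the closed curve you chose), it is cleaner to observe that the arc realizing $\{v_i,v_j\}$, drawn say in the closed interior of $C$, splits that closed disk into two closed regions whose intersection is exactly that arc, with $v_k$ and $v_\ell$ in different regions and off the common boundary; any arc from $v_k$ to $v_\ell$ in the same closed region of $C$ and meeting $C$ only at its endpoints must then cross the edge $v_iv_j$ in an interior point, contradicting planarity. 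With that phrasing the argument is complete.
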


Any convex $n$-gon is the intersection of $n$ halfplanes, where the boundary of each halfplane contains a side of the $n$-gon. In general, the intersection of $n$ halfplanes is not always a convex polygon: it can be a convex unbounded set whose boundary is a connected polyline with the first and last sides being halflines instead of segments. We say that such a convex set is an \DEF{unbounded} convex $n$-gon, and if $s_0,s_1,\ldots,s_{n-1}$ denote the sides in counter-clockwise order, then $s_0$ and $s_{n-1}$ are the first and last sides, that is, $s_0$ and $s_{n-1}$ are halflines and $s_1,\ldots,s_{n-2}$ are segments. We consider in the case of an unbouded convex $n$-gon that $s_0$ and $s_{n-1}$ \DEF{are not} consecutive sides, and the side disks at them are halfplanes as degenerated disks. In our proof we use both convex $n$-gons and unbounded convex $n$-gons. Given two sides of an (unbounded) convex polygon, the \DEF{bisector} is the line that contains the points of the polygon that are equidistant from the two sides.

To prove that $G_{\mathtt{c}}$ is bipartite, we will show that it does not have cycles of odd length, and the main results that we obtain in this direction are the following ones:

\begin{lemma}[{\bf 1-Chord}]\label{lem:atmost1}
Let $P_n$ be an (unbounded) convex $n$-gon, $n\ge 5$, with sides denoted $s_0,s_1,\ldots,$ $s_{n-1}$ in counter-clockwise order. Let $D_0,D_1,\ldots,D_{n-1}$ be the side disks of $P_n$ at $s_0,s_1,\ldots,$ $s_{n-1}$, respectively. Then, there exists a side $s_i$ such that the disk $D_i$ intersects at most one disk among the disks $D_{i+2},D_{i+3},\dots,D_{i-3},D_{i-2}$ not neighbouring $D_i$, where subindices are taken modulo $n$. That is, there is at most one chord with endpoint the point representing $D_i$ in the circular embedding of the intersection graph of  $D_0,D_1,\ldots,D_{n-1}$.
\end{lemma}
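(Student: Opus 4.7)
The plan is to identify a side $s_i$ with a carefully chosen extremal property and then show, by a geometric argument exploiting convexity, that at most one of the non-neighboring disks $D_{i+2},\ldots,D_{i-2}$ can intersect $D_i$. The starting point is the basic reformulation: $D_i\cap D_j\neq\emptyset$ if and only if $|m_i m_j|\le r_i+r_j$, where $m_i,m_j$ are the midpoints and $r_i=|s_i|/2$, $r_j=|s_j|/2$. I would also record the structural observation that the midpoints $m_0,m_1,\ldots,m_{n-1}$ are themselves vertices of a convex polygon, because the consecutive edges $m_im_{i+1}=(A_{i+2}-A_i)/2$ are scaled copies of the diagonals of $P_n$, whose directions rotate monotonically by convexity of $P_n$.

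Next I would pick the side $s_i$ by an extremal rule. The two natural candidates to try are (a) the side $s_i$ whose midpoint $m_i$ is a vertex of the midpoint polygon attaining the smallest interior angle, or (b) a side incident to an extremal vertex of $P_n$ (say the leftmost vertex after a generic rotation). In either case the extremal choice is designed so that on each side of the perpendicular bisector $\beta_i$ of $s_i$, the midpoints of the non-neighboring sides are forced to lie far enough from $m_i$ that the intersection condition can hold for at most one of them.

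The heart of the proof is an incompatibility argument: assume for contradiction that $D_i$ intersects two distinct non-neighbors $D_j$ and $D_k$. From $|m_im_j|\le r_i+r_j$ and $|m_im_k|\le r_i+r_k$, together with the convex position of $m_0,\ldots,m_{n-1}$ and the extremal choice of $s_i$, I would derive geometric relations among the endpoints $A_i,A_{i+1},A_j,A_{j+1},A_k,A_{k+1}$ that contradict convexity of $P_n$. The natural way to organise this is to split into the two cases where $m_j$ and $m_k$ lie on the same side of $\beta_i$ and where they lie on opposite sides; the bisector-of-two-sides notion introduced for (unbounded) convex polygons in the Preliminaries is tailored for exactly this kind of argument, and the unbounded case should be handled by replacing the relevant disks by halfplanes and reusing the same distance inequality in a limiting sense.

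The main obstacle is, as the paper already stresses, that the obvious averaging and Helly-type approaches fail: the perimeter inequality $\sum_i |m_im_{i+2}|>\mathrm{perimeter}(P_n)$ is not always true (the pentagon with vertices $(1,9),(0,3),(0,-3),(1,-9),(60,0)$ is an explicit counterexample), and fractional Helly only bounds the number of triples without a common point, not the number of pairwise intersections from a single disk. So I expect the bulk of the work to lie in choosing the extremal side $s_i$ correctly and then pushing through the case analysis around $\beta_i$, including the somewhat subtle limit cases arising from unbounded $n$-gons where $s_0$ or $s_{n-1}$ is a halfline and its "disk" degenerates to a halfplane.
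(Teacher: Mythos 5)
Your proposal is a plan rather than a proof, and the two ingredients it does commit to are not the ones that make the argument work. The paper's proof hinges on a nontrivial extremal choice whose existence itself requires an argument (Lemma~\ref{lem:disk}, via the medial axis): a disk of minimum radius tangent to three consecutive sides $s_{i-1},s_i,s_{i+1}$, chosen so that $\ell(s_{i-1})$ and $\ell(s_{i+1})$ meet at a point $P$ separated from the disk by $\ell(s_i)$. The distinguished side is then the \emph{middle} side $s_i$ of that triple, the relevant separating line is the \emph{angle bisector} of the two flanking sides $s_{i-1}$ and $s_{i+1}$ (the ray from $P$ through the disk's center), not the perpendicular bisector of $s_i$, and the count ``at most one'' falls out because that single ray crosses the interior of at most one other side, while for every side $\mathsf{x}$ it misses one proves $D_i\cap D_\mathsf{x}=\emptyset$ (Lemma~\ref{lem:abcx}): $D_i$ is trapped in the disk $\mathcal{C}_P$ of radius $\seg{PT_1}$ by Lemma~\ref{lem:1}, and $D_\mathsf{x}$ is disjoint from $\mathcal{C}_P$ by Lemmas~\ref{lem:2}--\ref{lem:5}. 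None of this machinery, nor any substitute for it, appears in your sketch; the step you describe as ``derive geometric relations \ldots{} that contradict convexity'' is precisely the hard part and is left entirely unexecuted.

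Moreover, the concrete choices you do name are shaky. For rule (b), consider the convex pentagon $A=(0,0)$, $B=(100,-1)$, $C=(102,0)$, $D=(100,1)$, $E=(50,1)$: the leftmost vertex is $A$, and the disk on the incident side $AB$ (radius $\approx 50.0$, center $(50,-0.5)$) intersects \emph{both} of its non-neighbours $D_{CD}$ and $D_{DE}$ (midpoint distances $\approx 51.01$ and $\approx 25.04$ against radius sums $\approx 51.12$ and $\approx 75.0$), so ``a side incident to the leftmost vertex'' is at best ambiguous and one of the two candidates fails; rule (a) is not supported by any argument. Structurally, splitting the other midpoints by the perpendicular bisector $\beta_i$ of $s_i$ naturally yields at most one intersecting non-neighbour \emph{per half-plane}, i.e.\ at most two chords, which is not what the lemma asserts; the paper avoids this by working with a single ray and proving disjointness for everything it misses. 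As it stands, the proposal has a genuine gap at each load-bearing point: the choice of $s_i$, the separating object, and the disjointness argument.
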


\begin{lemma}[{\bf No-3-Cycles}]\label{lem:no3cycles}
Let $P_n$ be an (unbounded) convex $n$-gon, $n\ge 6$, and $\mathsf{a}$, $\mathsf{b}$, $\mathsf{c}$, $\mathsf{d}$, $\mathsf{e}$, $\mathsf{f}$ six sides appearing in this order counter-clockwise. At least one of the following statements is satisfied:
\begin{itemize}
\item[(a)] $D_\mathsf{a}$ and $D_\mathsf{d}$ do not intersect.
\item[(b)] $D_\mathsf{b}$ and $D_\mathsf{e}$ do not intersect.
\item[(c)] $D_\mathsf{c}$ and $D_\mathsf{f}$ do not intersect.
\end{itemize}
That is, the intersection graph of the chords in the circular embedding of the side disks of $P_n$ does not have 3-length cycles.
\end{lemma}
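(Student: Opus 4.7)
The plan is to argue by contradiction: assume that $D_\mathsf{a}\cap D_\mathsf{d}$, $D_\mathsf{b}\cap D_\mathsf{e}$, and $D_\mathsf{c}\cap D_\mathsf{f}$ are all non-empty, and derive a contradiction from the convexity of $P_n$. As a first step I would produce a witness point inside $P_n$ for each pair. Writing $M_s$ for the midpoint of a side $s$, both $M_s$ and $M_{s'}$ lie on the boundary of $P_n$, so by convexity the segment $\overline{M_sM_{s'}}$ is contained in $P_n$. The condition $D_s\cap D_{s'}\neq\emptyset$ is equivalent to $|M_sM_{s'}|\le(|s|+|s'|)/2$, so the point $P$ on $\overline{M_sM_{s'}}$ at distance $\min(|s|/2,|M_sM_{s'}|)$ from $M_s$ satisfies $|PM_s|\le|s|/2$ and $|PM_{s'}|\le|s'|/2$, giving $P\in D_s\cap D_{s'}\cap P_n$. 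Applying this three times yields witnesses $P_1,P_2,P_3\in P_n$ lying on the chords $\overline{M_\mathsf{a} M_\mathsf{d}}$, $\overline{M_\mathsf{b} M_\mathsf{e}}$, $\overline{M_\mathsf{c} M_\mathsf{f}}$ respectively.

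Next I would exploit the cyclic alternation: since $M_\mathsf{a},M_\mathsf{b},M_\mathsf{c},M_\mathsf{d},M_\mathsf{e},M_\mathsf{f}$ occur in this counter-clockwise order on the boundary of $P_n$, the three chords pairwise cross inside $P_n$ and together enclose a central triangle. This pairwise-crossing configuration is the geometric realisation, on $P_n$, of the forbidden $3$-cycle in the chord intersection graph that the lemma rules out.

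The crux, and main obstacle, is to turn the three midpoint-distance bounds together with the pairwise-crossing structure into a contradiction. My main target is the convex-geometric inequality
\[
|M_\mathsf{a} M_\mathsf{d}|+|M_\mathsf{b} M_\mathsf{e}|+|M_\mathsf{c} M_\mathsf{f}| \;>\; \tfrac{1}{2}\bigl(|\mathsf{a}|+|\mathsf{b}|+|\mathsf{c}|+|\mathsf{d}|+|\mathsf{e}|+|\mathsf{f}|\bigr),
\]
valid for every convex (possibly unbounded) configuration of six sides in this counter-clockwise order; summing the three pair inequalities $|M_sM_{s'}|\le(|s|+|s'|)/2$ contradicts it immediately. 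To prove the inequality in the bounded hexagonal case I would use the vector identity $2(M_{s_{i+3}}-M_{s_i})=s_{i+1}+2s_{i+2}+2s_{i+3}+s_{i+4}$ (indices mod $6$) together with the closing relation $\sum s_i=0$ and reverse triangle inequalities, or alternatively a Cauchy-type width/perimeter projection argument; the geometric content is that a convex polygon cannot be simultaneously ``narrow'' in three interleaved diagonal directions. For the general $n$-gon and the unbounded cases I would extend by passing to suitable limits of the six selected sides.

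The principal difficulty is exactly this bridging step. Merely summing the three Thales statements (the two sides of each intersecting pair subtend angles at the respective witness $P_i$ summing to at least $\pi$) does not conflict with the fact that at any interior point of $P_n$ the angles to the $n$ sides sum to $2\pi$, because the three $P_i$ are distinct and each carries its own angular budget. Bridging the gap will therefore require either the metric inequality above as a purely convex-geometric fact, or a careful choice of auxiliary point (for example inside the central triangle, or at one of the $P_i$) at which angular bounds from more than one pair can be simultaneously exploited; the latter route is delicate because the $\ge\pi/2$ angular guarantees at each $P_i$ do not transfer to other points without loss.
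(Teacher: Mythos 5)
Your reduction is faithful up to the point where all the work begins, and then it stops. Replacing ``at least one pair of opposite disks is disjoint'' by the single global inequality
\[
|M_\mathsf{a} M_\mathsf{d}|+|M_\mathsf{b} M_\mathsf{e}|+|M_\mathsf{c} M_\mathsf{f}| \;>\; \tfrac{1}{2}\bigl(|\mathsf{a}|+|\mathsf{b}|+|\mathsf{c}|+|\mathsf{d}|+|\mathsf{e}|+|\mathsf{f}|\bigr)
\]
is a correct sufficient condition (and the witness points $P_1,P_2,P_3$ and the central triangle are never used afterwards, so they can be deleted), but this inequality \emph{is} the lemma: you do not prove it, you only list candidate techniques and yourself flag it as the principal difficulty. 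Moreover, the techniques you list cannot succeed, because the inequality has no slack. Consider the convex hexagon with vertices $(0,0)$, $(\tfrac12-\delta,-\eta)$, $(\tfrac12+\delta,-\eta)$, $(1,0)$, $(\tfrac12+\delta,\eta)$, $(\tfrac12-\delta,\eta)$: as $\delta,\eta\to 0$ all three pairs of opposite side disks become simultaneously tangent and both sides of your inequality tend to $1$, so the ratio of the two sides has infimum $1$ over all convex hexagons. Any argument that discards length is therefore doomed. Concretely, with edge vectors $v_0,\dots,v_5$ (so $\sum_i v_i=0$) one has $2(M_{i+3}-M_i)=(v_{i+1}+v_{i+2})-(v_{i+4}+v_{i+5})$, so each $v_j$ occurs in exactly two of the three midpoint differences; bounding each difference from below by a projection onto a unit vector yields $\sum_j\langle v_j,c_j\rangle$ with $|c_j|\le 2$, and matching the target $2\sum_j|v_j|$ would force $\langle \hat v_j,c_j\rangle=2$ for every $j$, which in turn forces distinct edge directions to coincide. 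Reverse triangle inequalities applied to your vector identity lose length for the same reason. So even granting that the inequality is true (which is plausible but nowhere established), the proposed route to it fails, and the paper's introduction explicitly warns that the analogous global midpoint--sum approach already breaks down for the pentagon.

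Two further gaps: the statement must be proved for \emph{unbounded} convex hexagons (this is how the general $n$-gon case is reduced to six sides), where two of the six sides are halflines, their ``midpoints'' do not exist and their side disks are halfplanes, so the entire metric formulation collapses; passing ``to suitable limits'' only yields non-strict inequalities and does not recover disjointness. For comparison, the paper's actual proof is local rather than global: it extends the six sides to a (possibly unbounded) hexagon and runs a case analysis on an inscribed disk tangent either to two opposite sides or to three pairwise non-consecutive sides, concluding via Lemma~\ref{lem:abcx}, the trigonometric estimate of Lemma~\ref{lem:quad}, and the deformation/excircle argument of Lemma~\ref{lem:3pairs}. That machinery is precisely what substitutes for the sharp global inequality you would need, and nothing in your proposal replaces it.
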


\begin{theorem}[{\bf Main}]\label{theo:main}
In any convex polygon the intersection graph of the side disks is planar.
\end{theorem}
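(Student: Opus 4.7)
The plan is to apply Theorem~\ref{theo:hamiltonian-planar}: since $G$ admits the Hamiltonian cycle $c=\langle D_0,D_1,\ldots,D_{n-1},D_0\rangle$, planarity of $G$ is equivalent to bipartiteness of the chord-intersection graph $G_{\mathtt{c}}$ of its circular embedding. It therefore suffices to show that $G_{\mathtt{c}}$ has no odd cycle, and I would prove this by induction on $n$, treating bounded and unbounded convex $n$-gons simultaneously so that the reductions described below remain within the inductive framework. The base cases $n\le 4$ are immediate, since $G_{\mathtt{c}}$ then has at most one vertex.

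For the inductive step with $n\ge 5$, apply Lemma~\ref{lem:atmost1} to find a side $s_i$ whose disk $D_i$ is incident to at most one chord in the circular embedding. Form an unbounded convex $(n-1)$-gon $P'$ by deleting $s_i$ and extending its neighbours $s_{i-1},s_{i+1}$ to halflines; the side ``disks'' of $P'$ are the disks $D_j$ for $j\neq i$, with $D_{i-1}$ and $D_{i+1}$ replaced by halfplanes $H_{i-1}\supset D_{i-1}$ and $H_{i+1}\supset D_{i+1}$. By the induction hypothesis, $G'_{\mathtt{c}}$ is bipartite. Because halfplanes contain their defining disks, every disk intersection of $G$ not involving $D_i$ persists in $G'$, and the crossing pattern among chords avoiding $D_i$ is identical in the two circular embeddings; hence the subgraph of $G_{\mathtt{c}}$ induced by chords avoiding $D_i$ embeds as a subgraph of $G'_{\mathtt{c}}$ and is bipartite. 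If $D_i$ has no incident chord, bipartiteness of $G_{\mathtt{c}}$ follows at once.

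The remaining case is when $D_i$ is incident to exactly one chord $\mathtt{c}_{i,k}$; any odd cycle in $G_{\mathtt{c}}$ must then traverse $\mathtt{c}_{i,k}$ and, by Lemma~\ref{lem:no3cycles}, has length at least $5$. The goal is to show that attaching this single chord to the bipartite sub-chord graph inherited from $G'_{\mathtt{c}}$ cannot produce an odd cycle; equivalently, that all chords crossing $\mathtt{c}_{i,k}$ can be placed in a single class of some bipartition of the graph on non-$D_i$ chords. The main obstacle is precisely this step: a shortest odd cycle through $\mathtt{c}_{i,k}$ has length $\ge 5$ and therefore contains no triangle, so Lemma~\ref{lem:no3cycles} cannot be applied to its chords directly. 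I expect the resolution will require exploiting the geometry of how $\mathtt{c}_{i,k}$ interleaves with the chords it crosses in $P'$, together with a strengthened inductive statement that propagates an explicit bipartition compatible with the reduction and lets the No-3-Cycles property be invoked on carefully chosen sextuples of sides adjacent to $\mathtt{c}_{i,k}$.
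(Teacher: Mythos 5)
Your proposal is incomplete, and the gap you flag yourself is the real one. Reducing to bipartiteness of $G_{\mathtt{c}}$ via Theorem~\ref{theo:hamiltonian-planar} and ruling out triangles via Lemma~\ref{lem:no3cycles} matches the paper, but your inductive step does not close. After deleting $s_i$ you inherit a bipartition of the chords avoiding $D_i$, and you must then show that the two endpoints of every even path joining two neighbours of the single chord $\mathtt{c}_{i,k}$ cannot both lie in the same colour class; equivalently, that all chords crossing $\mathtt{c}_{i,k}$ can be placed on one side of some bipartition of $G_{\mathtt{c}}-\mathtt{c}_{i,k}$. Nothing in Lemmas~\ref{lem:atmost1} or~\ref{lem:no3cycles} gives you that: adding one vertex to a bipartite graph can perfectly well create a $5$-cycle, and a shortest odd cycle through $\mathtt{c}_{i,k}$ is chordless, so no triangle lemma applies to it. The ``strengthened inductive statement'' you gesture at is exactly the missing content, and there is no evident candidate for it.

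The paper closes this by a different reduction that never inducts on $n$. It takes a \emph{minimal} (chordless) odd cycle $\langle \mathtt{c}_0,\ldots,\mathtt{c}_{k-1}\rangle$, $k\ge 5$, in $G_{\mathtt{c}}$ and looks at the number $t$ of distinct endpoints of its chords. While $t>k$, some $\mathtt{c}_i$ has neighbours $\mathtt{c}_{i-1},\mathtt{c}_{i+1}$ sharing no endpoint, and the two consecutive sides $\mathsf{b},\mathsf{c}$ between them are merged into the single side $\mathsf{b}|\mathsf{c}$; Lemma~\ref{lem:a|b} guarantees that any disk meeting both $D_{\mathsf{b}}$ and $D_{\mathsf{c}}$ still meets $D_{\mathsf{b}|\mathsf{c}}$, so the $k$-cycle survives in a (possibly unbounded) polygon with one fewer relevant side. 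Iterating down to $t=k$ forces every one of the $k$ sides to carry two chords of the cycle, contradicting the 1-Chord lemma applied to that reduced $t$-gon. This is why Lemma~\ref{lem:atmost1} is stated for all (unbounded) convex $n$-gons: it is invoked at the end of the reduction, not at the top of an induction. If you want to salvage your plan, you would need either the missing colour-class control on the neighbours of $\mathtt{c}_{i,k}$ or to switch to this minimal-cycle/endpoint-contraction argument.
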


We prove the above three results in sections~\ref{sec:partI},~\ref{sec:partII}, and~\ref{sec:partIII}, respectively. In each section, before proving the corresponding main result, we also prove several technical lemmas, with arguments mostly from Euclidean geometry. The next basic results will be used:

\begin{figure}[t]
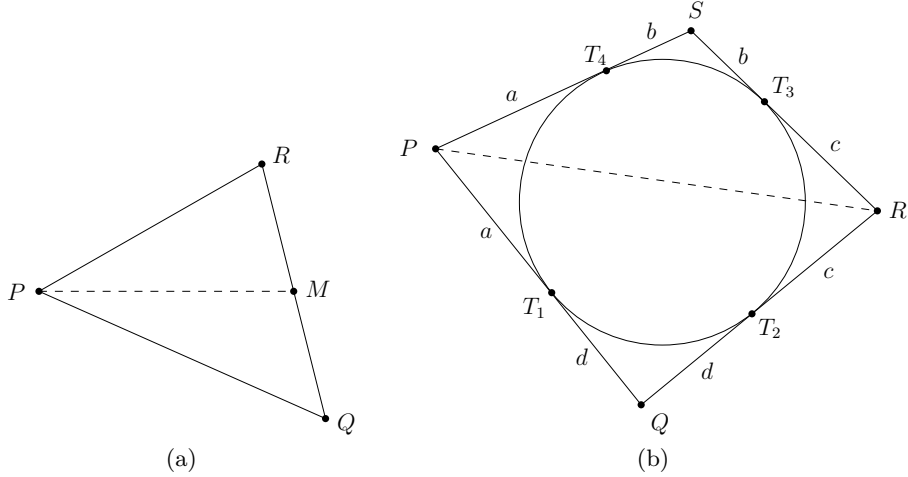

	\centering
	\subfloat[]{
		\includegraphics[scale=0.75,page=1]{img.pdf}
		\label{fig:median}
	}\hspace{0.2cm}
	\subfloat[]{
		\includegraphics[scale=0.75,page=2]{img.pdf}
		\label{fig:tangential}
	}
	\caption{\small{
		(a) Illustration of Theorem~\ref{lem:median}}.
		(b) Illustration of Lemma~\ref{lem:tangential}.
	}
	\label{fig:base-lemmas}
\end{figure}

\begin{theorem}[Apolonio's Theorem]\label{lem:median}
Let $P$, $Q$, and $R$ be three different points of the plane, and let $M$ denote
the midpoint of the segment $QR$ (see Figure~\ref{fig:median}). Then, the length $\seg{PM}$ satisfies:
\[
	\seg{PM} ~=~ \frac{1}{2} \sqrt{2\left(\seg{PQ}^2 + \seg{PR}^2\right) - \seg{QR}^2}.
\]
\end{theorem}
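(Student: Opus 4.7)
The plan is to prove the identity by a direct coordinate computation. I would place $M$ at the origin and take the line $\ell(Q,R)$ as the $x$-axis, writing $Q=(-a,0)$ and $R=(a,0)$ with $a=\seg{QR}/2$, and letting $P=(x,y)$ be arbitrary. Then
\[
  \seg{PQ}^2=(x+a)^2+y^2,\qquad \seg{PR}^2=(x-a)^2+y^2,\qquad \seg{PM}^2=x^2+y^2,
\]
and expanding and summing the first two yields $\seg{PQ}^2+\seg{PR}^2 = 2(x^2+y^2) + 2a^2 = 2\,\seg{PM}^2 + \seg{QR}^2/2$. Isolating $\seg{PM}^2$ and taking the positive square root gives the claimed formula.

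Alternatively, a coordinate-free argument applies the law of cosines to the triangles $\Delta PMQ$ and $\Delta PMR$. Since $\angle PMQ$ and $\angle PMR$ are supplementary, their cosines are opposite, and because $\seg{QM}=\seg{MR}$, adding the two resulting identities cancels the cross terms and reproduces the same relation. A third option is to expand $|\mathbf{p}-\mathbf{m}|^2$ with $\mathbf{m}=(\mathbf{q}+\mathbf{r})/2$ and use the polarization identity $2\,\mathbf{q}\cdot\mathbf{r} = |\mathbf{q}|^2+|\mathbf{r}|^2-|\mathbf{q}-\mathbf{r}|^2$, which produces the answer in one line.

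The statement is the classical Apollonius theorem, so there is no genuine obstacle: every approach reduces to a single algebraic rearrangement. The only point meriting a moment's care is the degenerate configuration in which $P$, $Q$, $R$ are collinear, but the coordinate argument remains valid verbatim, while the law-of-cosines version would need to interpret $\angle PMR$ as $0$ or $\pi$. I would therefore present the coordinate proof for its uniformity.
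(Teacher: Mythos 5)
Your coordinate computation is correct and complete: placing $M$ at the origin with $Q=(-a,0)$, $R=(a,0)$ gives $\seg{PQ}^2+\seg{PR}^2=2\seg{PM}^2+2a^2$, and since $\seg{QR}^2=4a^2$ this rearranges immediately to the stated formula. The paper itself offers no proof of this statement --- it is the classical Apollonius (median-length) theorem, cited as a known fact in the Preliminaries and used later only through the algebraic identity $4\seg{PM}^2=2(\seg{PQ}^2+\seg{PR}^2)-\seg{QR}^2$ --- so there is nothing to compare against; any of your three routes (coordinates, paired law of cosines with supplementary angles at $M$, or the polarization identity) would serve, and your observation that the coordinate version handles the collinear degenerate case uniformly is a reasonable ground for preferring it.
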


A known fact that we will also use is the following one: Given a disk and a point
outside it, the two lines passing through the point and tangent to the disk define
two segments of equal lengths. Each segment connects the point with a point of tangency
between one of the lines and the disk.

\begin{lemma}[Diagonal of a tangential quadrilateral~\cite{hajja2008}]\label{lem:tangential}
Let $P$, $Q$, $R$, and $S$ be the vertices of a tangential quadrilateral, tangent
to the disk $\mathcal{C}$. Let $T_1$, $T_2$, $T_3$, and $T_4$ denote the tangent
points between the sides $PQ$, $QR$, $RS$, and $SP$ and $\mathcal{C}$, respectively.
Let $a=\seg{PT_1}=\seg{PT_4}$, $b=\seg{ST_4}=\seg{ST_3}$, $c=\seg{RT_3}=\seg{RT_2}$, and
$d=\seg{QT_2}=\seg{QT_1}$ (see Figure~\ref{fig:tangential}). Then, the length $\seg{PR}$ satisfies:
\[
	\seg{PR} ~=~ \sqrt{ \frac{a+c}{b+d} \cdot \bigl( (a+c)(b+d) + 4bd \bigr)}.
\]
\end{lemma}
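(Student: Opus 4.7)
The plan is to apply the law of cosines in the triangle $\Delta PQR$, whose sides $PQ$ and $QR$ have lengths $a+d$ and $c+d$, and to simplify the result using a closed form for $d^{2}+r^{2}$, where $r$ is the radius of $\mathcal{C}$. To set this up, let $I$ be the center of $\mathcal{C}$, and let $\alpha,\beta,\gamma,\delta$ denote the half-angles of the quadrilateral at $P,Q,R,S$, respectively. From the right triangles formed by $I$, each vertex, and an adjacent tangent point, one reads off $\tan\alpha=r/a$, $\tan\beta=r/d$, $\tan\gamma=r/c$, $\tan\delta=r/b$. Since the interior angles of the quadrilateral sum to $2\pi$, we have $\alpha+\beta+\gamma+\delta=\pi$; expanding the resulting identity $\cos(\alpha+\gamma)+\cos(\beta+\delta)=0$ via these tangent values gives the classical inradius identity
\[
r^{2}(a+b+c+d)=ac(b+d)+bd(a+c),
\]
from which a few lines of algebra yield the factorization
\[
d^{2}+r^{2}=\frac{(b+d)(a+d)(c+d)}{a+b+c+d}.
\]

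Next, the interior angle at $Q$ equals $2\beta$, so $\cos(2\beta)=(d^{2}-r^{2})/(d^{2}+r^{2})$; applying the law of cosines in $\Delta PQR$ and grouping terms via $(a+d)^{2}+(c+d)^{2}+2(a+d)(c+d)=(a+c+2d)^{2}$ gives
\[
\seg{PR}^{2}=(a+c+2d)^{2}-\frac{4d^{2}(a+d)(c+d)}{d^{2}+r^{2}}.
\]
Substituting the factorization reduces the second term to $4d^{2}(a+b+c+d)/(b+d)$, and the remaining identity
\[
(a+c+2d)^{2}-\frac{4d^{2}(a+b+c+d)}{b+d}=\frac{(a+c)\bigl((a+c)(b+d)+4bd\bigr)}{b+d}
\]
is a routine one-line algebraic check, yielding the stated formula.

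The main obstacle is spotting the factorization $d^{2}+r^{2}=(b+d)(a+d)(c+d)/(a+b+c+d)$; it is what converts $\seg{PR}^{2}$ from a rational function of $r^{2}$ into a clean polynomial in the tangent lengths. Deriving the underlying inradius identity from the angle-sum condition is the step requiring the most care, but once it is in hand the rest of the argument is short.
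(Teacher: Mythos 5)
The paper gives no proof of this lemma at all: it is imported verbatim from the cited reference \cite{hajja2008}, so there is nothing internal to compare against. Your argument is a correct, self-contained derivation. I checked the three key ingredients: the inradius identity $r^2(a+b+c+d)=ac(b+d)+bd(a+c)$ is the classical formula $r^2=(abc+abd+acd+bcd)/(a+b+c+d)$ for a tangential quadrilateral with tangent lengths $a,d,c,b$ at $P,Q,R,S$; the factorization $d^2+r^2=(b+d)(a+d)(c+d)/(a+b+c+d)$ follows from it by direct expansion; and the law of cosines in $\Delta PQR$ with $\cos(2\beta)=(d^2-r^2)/(d^2+r^2)$, after the regrouping you describe, reduces to $\seg{PR}^2=(a+c)\bigl((a+c)(b+d)+4bd\bigr)/(b+d)$ exactly as claimed (the final one-line check does balance: $(a+c+2d)^2(b+d)-4d^2(a+b+c+d)=(a+c)^2(b+d)+4bd(a+c)$). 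One presentational remark: deriving the inradius identity from $\cos(\alpha+\gamma)+\cos(\beta+\delta)=0$ forces you through square roots $\sqrt{a^2+r^2}$ etc.\ and a squaring step whose extraneous factor must be discarded; it is slightly cleaner to expand $\tan(\alpha+\beta+\gamma+\delta)=0$, which gives $e_1=e_3$ for the elementary symmetric functions of $r/a,r/d,r/c,r/b$ and yields the identity immediately. Either way the lemma is proved.
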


\section{Part I: Proof of 1-Chord lemma (Lemma~\ref{lem:atmost1})}\label{sec:partI}

\begin{figure}[t]
	\centering
	\includegraphics[scale=0.8,page=3]{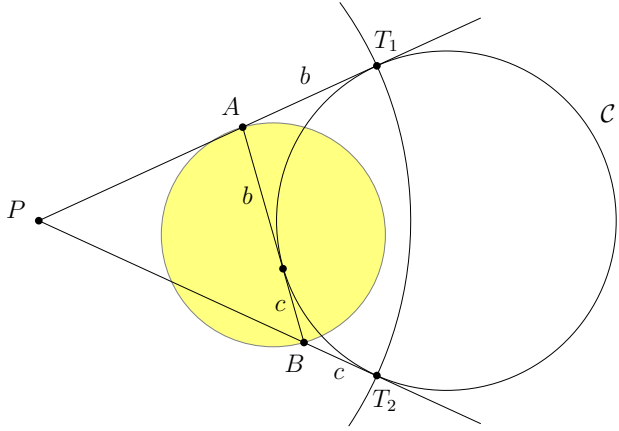}
	\caption{\small{Illustration of Lemma~\ref{lem:1}}.}
	\label{fig:lem1}
\end{figure}

\begin{lemma}\label{lem:1}
Let $\mathcal{C}$ be a disk and let $P$ be a point not contained in $\mathcal{C}$.
Let $T_1$ and $T_2$ be the points of the boundary of $\mathcal{C}$ such that the lines
$\ell(P,T_1)$ and $\ell(P,T_2)$ are tangents to $\mathcal{C}$. Let $A$ be a point in the segment
$PT_1$ and $B$ a point in the segment $PT_2$ such that the segment $AB$ is tangent to $\mathcal{C}$
(see Figure~\ref{fig:lem1}).
Then, the disk $D_{AB}$ is contained in the disk with center $P$
and radius $\seg{PT_1}=\seg{PT_2}$.
\end{lemma}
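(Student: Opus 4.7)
The plan is to reduce the set containment to a single scalar inequality. Writing $M$ for the midpoint of $AB$, the disk $D_{AB}$ has center $M$ and radius $\tfrac{1}{2}\seg{AB}$, so by the triangle inequality every point $Q \in D_{AB}$ satisfies $\seg{PQ} \le \seg{PM} + \tfrac{1}{2}\seg{AB}$, with equality attained along the ray from $P$ through $M$. The whole lemma therefore reduces to proving
\[
\seg{PM} + \tfrac{1}{2}\seg{AB} \;\le\; \seg{PT_1}.
\]

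To put this in a workable form, I would first exploit the equal-tangent-length property. Let $T$ be the point where $AB$ is tangent to $\mathcal{C}$. Applied at the external points $A$ and $B$, the property gives $\seg{AT_1} = \seg{AT}$ and $\seg{BT_2} = \seg{BT}$. Writing $a := \seg{AT}$, $b := \seg{BT}$, and $s := \seg{PT_1} = \seg{PT_2}$, this yields the clean parametrization $\seg{PA} = s - a$, $\seg{PB} = s - b$, and $\seg{AB} = a + b$. Adding the two expressions for $\seg{PT_1}$ and $\seg{PT_2}$ gives $2s = (s-a) + (s-b) + (a+b) \ge a + b = \seg{AB}$, so $s - \tfrac{1}{2}\seg{AB} \ge 0$ and squaring the target inequality is legitimate.

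Next I would compute $\seg{PM}^2$ using Apolonio's Theorem (Theorem~\ref{lem:median}) applied to the median $PM$ of triangle $PAB$:
\[
4\seg{PM}^2 \;=\; 2\bigl(\seg{PA}^2 + \seg{PB}^2\bigr) - \seg{AB}^2.
\]
Substituting the parametrization above and squaring $\seg{PM} \le s - \tfrac{1}{2}\seg{AB}$, the inequality becomes
\[
2(s-a)^2 + 2(s-b)^2 - (a+b)^2 \;\le\; \bigl(2s - (a+b)\bigr)^2,
\]
which expands to $0 \le 4ab$, immediate from $a, b \ge 0$.

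I do not foresee any genuine obstacle here: once the equal-tangent-length property and Apolonio's Theorem are in place, the proof is essentially algebraic. The only minor care needed is verifying that squaring is legitimate (handled above) and noting the degenerate case $P = M$, where $\seg{PM} = 0$ and the bound collapses to $\tfrac{1}{2}\seg{AB} \le s$, which is precisely the inequality $2s \ge a + b$ already observed.
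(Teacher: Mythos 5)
Your proposal is correct and follows essentially the same route as the paper: equal tangent lengths from $A$ and $B$ give the parametrization of $\seg{PA}$, $\seg{PB}$, $\seg{AB}$, Apolonio's Theorem gives $\seg{PM}$, and squaring reduces the containment to $0\le 4ab$. The only cosmetic difference is notation (the paper writes $a=\seg{PT_1}$, $b=\seg{AT_1}$, $c=\seg{BT_2}$ and arrives at $0\le 4bc$) and the justification that squaring is legitimate, which the paper obtains from $b\le a$ and $c\le a$ rather than from $2s\ge a+b$.
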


\begin{proof}
Let $a=\seg{PT_1}=\seg{PT_2}$, $b=\seg{AT_1}$, and $c=\seg{BT_2}$. Let $M$ denote the midpoint
of the segment $AB$, and note that $\seg{PA}=a-b$, $\seg{PB}=a-c$, and $\seg{AB}=b+c$. To prove
the result, it suffices to prove that
\[
	\seg{PM} + \seg{MA} ~\le~ a.
\]
Note that $\seg{PM}=(1/2)\sqrt{2((a-b)^2 + (a-c)^2) - (b+c)^2}$ by Theorem~\ref{lem:median},
and that $\seg{MA}=(b+c)/2$. Since $b\le a$ and $c\le a$, which implies $(b+c)/2\le a$,
verifying the above inequation is equivalent to proving that
\[
	4\cdot \seg{PM}^2 ~=~ 2((a-b)^2 + (a-c)^2) - (b+c)^2 ~\le~ \left(2a - (b+c)\right)^2.
\]
This last equation holds since the following inequalities are equivalent
\begin{eqnarray*}
	2((a-b)^2 + (a-c)^2) - (b+c)^2 & \le & \left(2a - (b+c)\right)^2\\
	4a^2 - 4ab - 4ac - 2bc + b^2 + c^2 & \le & 4a^2 + b^2 + c^2 - 4ab - 4ac + 2bc \\
									0 & \le & 4bc.
\end{eqnarray*}
The result thus follows.
\end{proof}

\begin{figure}[t]
	\centering
	\includegraphics[scale=0.8,page=4]{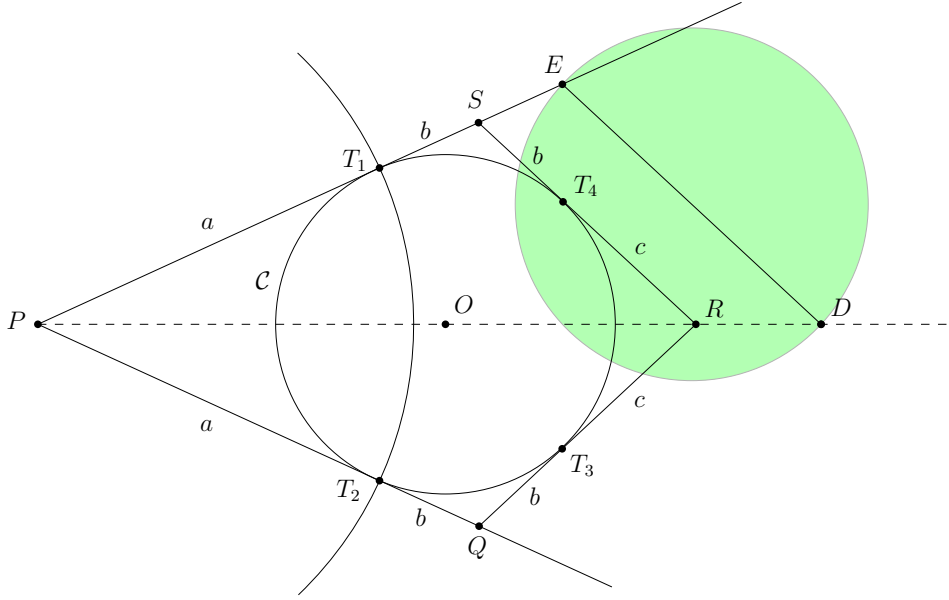}
	\caption{\small{Illustration of Lemma~\ref{lem:2}}.}
	\label{fig:lem2}
\end{figure}

\begin{lemma}\label{lem:2}
Let $\mathcal{C}$ be a disk centered at the point $O$, and let $P$ be a point not contained in $\mathcal{C}$.
Let $T_1$ and $T_2$ be the points of the boundary of $\mathcal{C}$ such that the lines
$\ell(P,T_1)$ and $\ell(P,T_2)$ are tangents to $\mathcal{C}$.
Let $E$ be a point in the halfline $h(P,T_1)\setminus PT_1$ and
$D$ a point in the halfline $h(P,O)$
such that: $\ell(E,D)$ does not
intersect the interior of $\mathcal{C}$, and $\angle EDP\le \pi/2$ (see Figure~\ref{fig:lem2}).
Then, the disk $D_{DE}$ does not
intersect the disk with center $P$ and radius $\seg{PT_1}=\seg{PT_2}$.
\end{lemma}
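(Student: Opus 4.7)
The plan is to place coordinates with $P$ at the origin and $O=(d,0)$ on the positive $x$-axis, so that $\mathcal{C}$ has radius $r=\sqrt{d^2-a^2}$ where $a=\seg{PT_1}$, and $T_1=(a^2/d,\,ar/d)$. Then $E=(\alpha a/d,\,\alpha r/d)$ with $\alpha=\seg{PE}>a$ (since $E$ lies strictly past $T_1$ on the tangent ray), and $D=(\beta,0)$ with $\beta=\seg{PD}>0$. The goal $\seg{PM}\ge a+\seg{DE}/2$, where $M$ is the midpoint of $DE$, will be reduced algebraically. Apolonio's Theorem gives $4\seg{PM}^2=2(\alpha^2+\beta^2)-\seg{DE}^2$, while the law of cosines in $\Delta PDE$ (the angle at $P$ has cosine $a/d$) gives $\seg{DE}^2=\alpha^2+\beta^2-2\alpha\beta a/d$. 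After squaring the target inequality and simplifying with $d^2=a^2+r^2$, the goal becomes $\alpha\beta\ge ad+d\seg{DE}$; squaring a second time it takes the clean form
\[
(\alpha^2-d^2)(\beta^2-d^2)\;\ge\; d^2r^2.
\]

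I would then translate the two hypotheses. The angle condition $\angle EDP\le\pi/2$, via $\vec{DE}\cdot\vec{DP}\ge 0$, becomes $\beta\ge\alpha a/d$. Computing the distance from $O$ to $\ell(E,D)$ and requiring it $\ge r$, then squaring, reduces the line-not-crossing hypothesis to
\[
\beta(\alpha^2-d^2)\;\ge\; 2\alpha d(\alpha-a).
\]
Since $\alpha>a$ makes the right-hand side strictly positive while $\beta>0$, this forces $\alpha>d$. A short computation then shows that the induced lower bound on $\beta$ is actually at least $d+r$ (the relevant quadratic in $\alpha$ factors as $(d-r)\,[\alpha-ad/(d-r)]^2$ after multiplying out with $d^2=a^2+r^2$), so in particular $\beta>d$ and $\alpha\beta>d^2>ad$, which validates both squaring steps above.

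It remains to verify $(\alpha^2-d^2)(\beta^2-d^2)\ge d^2r^2$. The left side is strictly increasing in $\beta$ for $\beta>d$, so it suffices to substitute the extremal $\beta=2\alpha d(\alpha-a)/(\alpha^2-d^2)$ coming from the line-not-crossing bound. After cancelling a factor of $(\alpha-a)>0$ and using $d^2-r^2=a^2$, the inequality collapses to the cubic
\[
g(\alpha)\;:=\;3\alpha^3-5a\alpha^2+d^2\alpha+ad^2\;\ge\;0\qquad(\alpha\ge a),
\]
which I would confirm by computing $g(a)=2ar^2\ge 0$, $g'(a)=r^2\ge 0$, and $g''(\alpha)=18\alpha-10a>0$ for $\alpha\ge a$. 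Thus $g$ is convex and increasing on $[a,\infty)$ with non-negative initial value, closing the argument.

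The main obstacle is the bookkeeping around the two squaring steps: each one requires checking non-negativity of the quantity being squared, and the chained algebraic simplifications offer many opportunities for sign errors. The critical geometric input that unlocks both squarings is the deduction that the two hypotheses jointly force $\alpha>d$ and $\beta>d$; once those inequalities are in hand, the remaining cubic follows from a routine calculus argument.
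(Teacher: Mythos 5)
Your proof is correct, but it takes a genuinely different route from the paper. The paper's argument is synthetic: it slides $\ell(E,D)$ parallel to itself until it becomes tangent to $\mathcal{C}$, closes up a tangential quadrilateral $PQRS$ by reflection, computes the diagonal $\seg{PR}$ via the tangential-quadrilateral formula (Lemma~\ref{lem:tangential}), bounds $\seg{PM}-\seg{MS}$ with Apolonio's theorem, and then transports the conclusion back to $D$, $E$ by similarity of $\Delta PRS$ and $\Delta PDE$. You instead coordinatize, encode both hypotheses and the conclusion as polynomial inequalities in $\alpha=\seg{PE}$, $\beta=\seg{PD}$, $a$, $d$, $r$, and reduce everything to the cubic $g(\alpha)=3\alpha^3-5a\alpha^2+d^2\alpha+ad^2\ge 0$; I checked the key identities ($\seg{DE}^2=\alpha^2+\beta^2-2\alpha\beta a/d$, the reduction to $(\alpha^2-d^2)(\beta^2-d^2)\ge d^2r^2$, the tangency condition $\beta(\alpha^2-d^2)\ge 2\alpha d(\alpha-a)$, the factorization $(d-r)\alpha^2-2ad\alpha+d^2(d+r)=(d-r)\bigl(\alpha-ad/(d-r)\bigr)^2$ via $a^2=d^2-r^2$, and $g(a)=2ar^2$, $g'(a)=r^2$) and they all hold. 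Since $g(a)>0$ strictly, you in fact get the strict separation the lemma needs. What your approach buys: it is self-contained (no tangential-quadrilateral formula, no similarity step) and, notably, it never uses the hypothesis $\angle EDP\le\pi/2$ — the tangency condition alone forces $\alpha>d$ and $\beta\ge d+r$ — so your computation simultaneously proves Lemma~\ref{lem:3}. What it costs is heavier algebra with two guarded squarings, and you should state explicitly that $D\neq P$ (i.e.\ $\beta>0$), which you need when dividing by $\beta$ and which is the only role the angle hypothesis really plays in your version; with $\beta=0$ the line $\ell(E,D)$ is the tangent line itself and the conclusion would be false.
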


\begin{proof}
Let $\mathcal{C}_P$ be the disk with center $P$ and radius $\seg{PT_1}=\seg{PT_2}$.
Let $S\in h(P,T_1)\setminus PT_1$ and $R\in h(P,O)\setminus PO$ be the points such that
the line $\ell(S,R)$ is parallel to $\ell(E,D)$ and tangent to $\mathcal{C}$
at the point $T_4$.
Let $Q$ denote the reflection point of $S$ about the line $\ell(P,O)$,
and note that the quadrilateral with vertices $P$, $Q$, $R$, and $S$ is a tangential
quadrilateral, tangent to $\mathcal{C}$.
Let $T_3$ be the point of tangency between the segment $QR$ and $\mathcal{C}$, and
$a=\seg{PT_1}=\seg{PT_2}$, $b=\seg{ST_1}=\seg{ST_4}=\seg{QT_2}=\seg{QT_3}$, and
$c=\seg{RT_3}=\seg{RT_4}$. Then, by Lemma~\ref{lem:tangential} used with $d=b$,
we have that
\[
	\seg{PR} ~=~ \sqrt{ (a+c)(a+c + 2b)}.
\]
Let $M$ denote the midpoint of the segment $SR$, which satisfies that $\seg{MS}=(b+c)/2$.
We claim that
\[
	\seg{PT_1} + \seg{MS} ~=~ a + (b+c)/2 ~<~ \seg{PM}.
\]
Indeed, by Theorem~\ref{lem:median}, we have that
\begin{eqnarray*}
	\seg{PM} & = & \frac{1}{2} \sqrt{2\left(\seg{PS}^2 + \seg{PR}^2\right) - \seg{SR}^2}\\
	         & = & \frac{1}{2} \sqrt{2\left((a+b)^2 + (a+c)(a+c + 2b)\right) - (b+c)^2},
\end{eqnarray*}
and the inequalities
\begin{eqnarray*}
	2a + b + c & < & 2\cdot\seg{PM}\\
	(2a + b + c)^2 & < & 2\left((a+b)^2 + (a+c)(a+c + 2b)\right) - (b+c)^2\\
	4a^2 + b^2 + c^2 + 4ab + 4ac + 2bc & < & 2(2a^2 + b^2 + c^2 + 4ab + 2ac + 2bc) - (b^2+c^2+2bc)\\
	4a^2 + b^2 + c^2 + 4ab + 4ac + 2bc & < & 4a^2 + b^2 + c^2 + 8ab + 4ac + 2bc\\
									0  & < & 4ab,
\end{eqnarray*}
are all equivalent and hold given that $a,b>0$, which imply the claim.
Let $M'$ denote the midpoint of the segment $ED$. Since triangles
$\Delta PRS$ and $\Delta PDE$
are similar, we have that $\seg{PM'}=\lambda \cdot \seg{PM}$ and $\seg{M'E}=\lambda\cdot \seg{MS}$,
where $\lambda = \seg{PE}/\seg{PS}=\seg{PD}/\seg{PR}=\seg{ED}/\seg{SR}\ge 1$ is the similarity ratio
between these triangles. Then, since $\seg{PM} - \seg{MS} > \seg{PT_1} > 0$, we have that
\[
	\seg{PT_1} ~<~ \seg{PM} - \seg{MS} ~\le~ \lambda(\seg{PM} - \seg{MS}) ~=~ \seg{PM'} - \seg{M'E}.
\]
This immediately implies that the disk $D_{DE}$ does not intersect the disk $\mathcal{C}_P$.
\end{proof}

\begin{lemma}\label{lem:3}
Let $\mathcal{C}$ be a disk centered at the point $O$, and let $P$ be a point not contained in $\mathcal{C}$.
Let $T_1$ and $T_2$ be the points of the boundary of $\mathcal{C}$ such that the lines
$\ell(P,T_1)$ and $\ell(P,T_2)$ are tangents to $\mathcal{C}$.
Let $E$ be a point in the halfline $h(P,T_1)\setminus PT_1$ and
$D$ a point in halfline $h(P,O)$
such that: $\ell(E,D)$ does not
intersect the interior of $\mathcal{C}$, and $\angle EDP > \pi/2$ (see Figure~\ref{fig:lem4}).
Then, the disk $D_{DE}$ does not
intersect the disk $\mathcal{C}_P$ with center $P$ and radius $\seg{PT_1}=\seg{PT_2}$.
\end{lemma}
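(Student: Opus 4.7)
The approach is to follow the template of Lemma~\ref{lem:2}: I will use Apolonio's theorem to reduce the disjointness of $D_{DE}$ and $\mathcal{C}_P$ to a single algebraic inequality, and then exploit the obtuse-angle hypothesis to force $D$ to lie strictly beyond $\mathcal{C}$ on the halfline $h(P,O)$, at which point that inequality follows almost immediately.

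I would begin by fixing the shorthand $u=\seg{PE}$, $v=\seg{PD}$, $p=\seg{PO}$, $r=\seg{PT_1}=\seg{PT_2}$, and $\alpha=\angle T_1PO$. The right triangle $\Delta PT_1O$ gives $\cos\alpha=r/p$ and $\sin\alpha=\rho/p$, where $\rho$ is the radius of $\mathcal{C}$; and since $E\in h(P,T_1)$ and $D\in h(P,O)$ we have $\angle EPD=\alpha$. Letting $M$ denote the midpoint of $ED$, Apolonio's theorem (Theorem~\ref{lem:median}) together with the law of cosines at the vertex $P$ of $\Delta PDE$ yields the identity
\[
\seg{PM}^2-\frac{\seg{ED}^2}{4}\;=\;\frac{u^2+v^2-\seg{ED}^2}{2}\;=\;uv\cos\alpha\;=\;\frac{uvr}{p},
\]
so that, after squaring, the goal $\seg{PM}>r+\seg{ED}/2$ becomes the single inequality $uv>p(r+\seg{ED})$. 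Next, I would translate the hypothesis $dist(O,\ell(E,D))\ge\rho$ into algebra: the law of sines in $\Delta PDE$ gives $\sin(\angle EDP)=u\sin\alpha/\seg{ED}$, and since $P$, $O$, $D$ are collinear on $h(P,O)$ we have $\sin(\angle EDO)=\sin(\angle EDP)$; combining,
\[
dist(O,\ell(E,D))\;=\;|v-p|\sin(\angle EDP)\;=\;\frac{|v-p|\,u\sin\alpha}{\seg{ED}}\;\ge\;p\sin\alpha,
\]
which simplifies to $|v-p|\,u\ge p\,\seg{ED}$.

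The central step is to prove that $v>p+\rho$, i.e.\ that $D$ lies strictly beyond $\mathcal{C}$ on $h(P,O)$. The band $p-\rho\le v\le p+\rho$ is ruled out at once because $D\in\mathcal{C}$ forces any line through $D$ to meet the interior of $\mathcal{C}$. For the near case $v<p-\rho$, I would square the previous constraint, substitute $\seg{ED}^2=u^2+v^2-2uvr/p$ (law of cosines) and simplify using $r^2=p^2-\rho^2$, arriving at $v(u^2-p^2)\ge 2pu(u-r)$. With $u>r$ the right-hand side is strictly positive; the subcase $u\le p$ is therefore impossible since the left-hand side is then nonpositive, while the subcase $u>p$ combined with $v<p-\rho$ rearranges to
\[
u^2(p+\rho)-2pur+p^2(p-\rho)\;<\;0,
\]
but this polynomial equals $\bigl(u\sqrt{p+\rho}-p\sqrt{p-\rho}\bigr)^2$, so the inequality cannot hold. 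Hence $v>p+\rho$, and the constraint reads $(v-p)u\ge p\,\seg{ED}$, i.e.\ $uv\ge p(u+\seg{ED})$; since $u>r$, this yields $uv>p(r+\seg{ED})$, which is the desired inequality.

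The hard part will be the elimination of the near case $v<p-\rho$; the collapse of the governing quadratic in $u$ into a perfect square is the clean algebraic manifestation of the geometric fact that, from any point $D$ strictly between $P$ and $\mathcal{C}$ on $h(P,O)$, the tangent from $D$ to $\mathcal{C}$ on the $T_1$-side meets $h(P,T_1)$ strictly inside segment $PT_1$, leaving no room for an $E\in h(P,T_1)\setminus PT_1$ lying on a line through $D$ that misses the interior of $\mathcal{C}$.
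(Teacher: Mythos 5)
Your proposal is correct, but it proves the lemma by a genuinely different route than the paper. The paper's proof is a short synthetic reduction: it takes the tangent line to $\mathcal{C}$ parallel to $\ell(E,D)$, which under the hypothesis $\angle EDP>\pi/2$ touches $\mathcal{C}$ at a point $T_3$ in the wedge between $h(P,O)$ and $h(P,T_2)$, meeting $h(P,T_1)$ at a point $S$ beyond $T_1$; then $\mathcal{C}_P$ and the tangent-length disk $\mathcal{C}_S$ centered at $S$ are externally tangent at $T_1$ and hence have disjoint interiors, $D_{SR}\subset\mathcal{C}_S$ by Lemma~\ref{lem:1}, and the scaling argument from the end of Lemma~\ref{lem:2} transfers the conclusion to $D_{DE}$. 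You instead run a self-contained trigonometric computation: Apolonio plus the law of cosines reduces disjointness to $uv>p(r+\seg{ED})$, the no-interior-intersection hypothesis becomes $|v-p|\,u\ge p\seg{ED}$ via the law of sines, and the perfect-square identity $u^2(p+\rho)-2pur+p^2(p-\rho)=\bigl(u\sqrt{p+\rho}-p\sqrt{p-\rho}\bigr)^2$ (using $r^2=p^2-\rho^2$) forces $v>p+\rho$, after which the conclusion drops out from $u>r$. I checked each algebraic step and they are all sound. What your approach buys is independence from Lemmas~\ref{lem:1} and~\ref{lem:2} and a precise localization of where the obtuse-angle hypothesis matters (only to exclude $v\le p+\rho$); indeed your argument nearly subsumes Lemma~\ref{lem:2} as well. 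What it costs is the case analysis on $v$, and one small imprecision there: for $v=p\pm\rho$ the point $D$ lies on the boundary of $\mathcal{C}$ and the tangent line at $D$ does \emph{not} meet the interior, so membership in $\mathcal{C}$ alone does not rule these out; you need to invoke $\angle EDP>\pi/2$ (the tangent at such a $D$ is perpendicular to $h(P,O)$, giving $\angle EDP=\pi/2$ exactly). This is a one-line fix, not a gap in the argument.
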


\begin{figure}[t]
	\centering
	\includegraphics[scale=0.7,page=22]{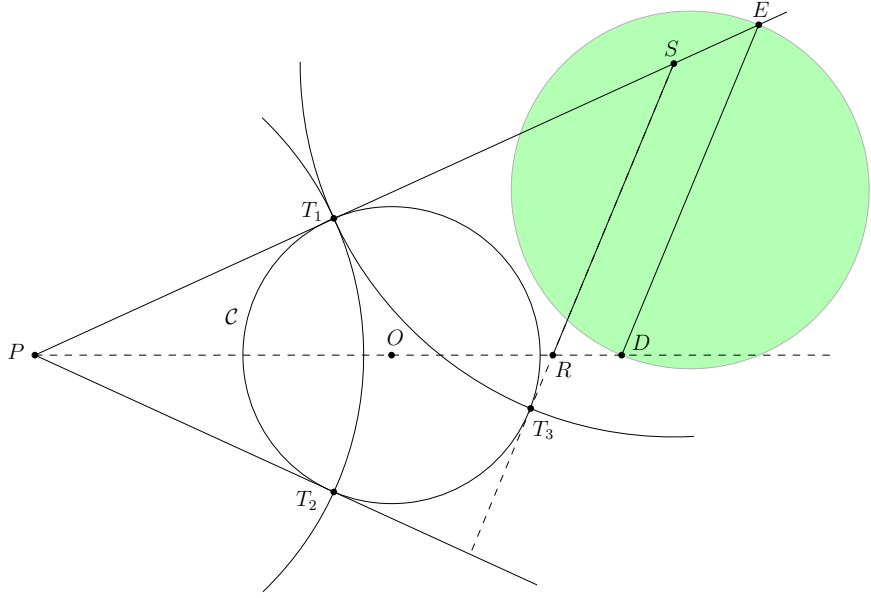}
	\caption{\small{Illustration of Lemma~\ref{lem:3}}.}
	\label{fig:lem3}
\end{figure}

\begin{proof}
Let $S\in h(P,T_1)\setminus PT_1$ and $R\in h(P,O)\setminus PO$ be the points such that
the line $\ell(S,R)$ is parallel to $\ell(E,D)$ and tangent to the disk $\mathcal{C}$
at the point $T_3$. Note that $T_3$ belongs to the wedge bounded by $h(P,O)$ and $h(P,T_2)$.
Let $\mathcal{C}_S$ be the disk with center $S$ and radius $\seg{PT_1}=\seg{PT_3}$ (see Figure~\ref{fig:lem3}).
Since $\mathcal{C}_P$ and $\mathcal{C}_S$ have disjoint interiors and $D_{SR}\subset \mathcal{C}_S$,
then $\mathcal{C}_P$ and $D_{SR}$ are disjoint. Similar as in the last arguments of the proof of Lemma~\ref{lem:2},
$D_{DE}$ does not intersect $\mathcal{C}_P$.
\end{proof}

\begin{lemma}\label{lem:4}
Let $\mathcal{C}$ be a disk centered at the point $O$, and let $P$ be a point not contained in $\mathcal{C}$.
Let $T_1$ and $T_2$ be the points of the boundary of $\mathcal{C}$ such that the lines
$\ell(P,T_1)$ and $\ell(P,T_2)$ are tangents to $\mathcal{C}$.
Let $E$ be a point in the halfline $h(P,T_1)\setminus PT_1$ and
$D\neq E$ a point in the interior of the convex wedge bounded by $h(P,T_1)$ and $h(P,O)$
such that: $h(E,D)$ does not intersect with $h(P,O)$, and $\ell(E,D)$ does not
intersect the interior of $\mathcal{C}$ (see Figure~\ref{fig:lem4}).
Then, the disk $D_{DE}$ does not
intersect the disk with center $P$ and radius $\seg{PT_1}=\seg{PT_2}$.
\end{lemma}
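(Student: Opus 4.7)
My plan is to prove Lemma~\ref{lem:4} by a direct computation based on Apolonio's theorem (Theorem~\ref{lem:median}), using the key geometric observation that the hypothesis $h(E,D)\cap h(P,O)=\emptyset$ forces $\seg{PD}\geq \seg{PE}$.

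First, I would place $P$ at the origin with $h(P,O)$ along the positive $x$-axis, setting $\phi=\angle T_1PO$ and $a=\seg{PT_1}$; then $E=(\lambda a\cos\phi,\lambda a\sin\phi)$ for some $\lambda>1$, and $D=(p\cos\theta,p\sin\theta)$ with $\theta\in(0,\phi)$ and $p>0$. A short calculation of the $x$-coordinate of the point where the halfline $h(E,D)$ crosses the $x$-axis shows that $h(E,D)\cap h(P,O)=\emptyset$ is equivalent to $E_yD_x\leq E_xD_y$; combined with $\theta<\phi$ (so $\tan\theta<\tan\phi$), this forces $D_y\geq E_y$, and hence $p\sin\theta\geq \lambda a\sin\phi$. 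In particular $p\geq\lambda a=\seg{PE}$.

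Second, let $M$ denote the midpoint of $DE$ and set $\gamma=\angle DPE=\phi-\theta$. By Apolonio's theorem, $4\seg{PM}^2=2\seg{PD}^2+2\seg{PE}^2-\seg{DE}^2$, and the disjointness $D_{DE}\cap\mathcal{C}_P=\emptyset$, i.e., $\seg{PM}>a+\seg{DE}/2$, simplifies after squaring and using the law of cosines to
\[
p^2(\lambda^2\cos^2\gamma-1)>a^2(\lambda^2-1).
\]
Substituting the lower bound $p\geq\lambda a\sin\phi/\sin(\phi-\gamma)$ from the first step, the task reduces to the trigonometric inequality $\lambda^2\sin^2\phi\,(\lambda^2\cos^2\gamma-1)>(\lambda^2-1)\sin^2(\phi-\gamma)$ for $\lambda>1$ and $\gamma\in(0,\phi)$, which I expect to verify by expansion and standard identities.

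The main obstacle is the boundary regime where $\lambda$ is close to $1$ and $\gamma$ is close to $\phi$ (equivalently, $\theta\to 0$, so $D$ approaches $h(P,O)$), in which the bound coming from $h(E,D)\cap h(P,O)=\emptyset$ alone becomes tight and may fail to be strict. In that regime one must also invoke the as-yet-unused hypothesis that $\ell(E,D)$ avoids the interior of $\mathcal{C}$. A clean way to bring this hypothesis into play is a reduction to Lemmas~\ref{lem:2} and~\ref{lem:3}: in the extremal subcase where $\ell(E,D)$ is tangent to $\mathcal{C}$, the intersection $D^{*}=\ell(E,D)\cap h(P,O)$ (when it exists) realizes the configuration of Lemma~\ref{lem:2} or Lemma~\ref{lem:3}, yielding $D_{ED^{*}}\cap\mathcal{C}_P=\emptyset$; a scaling comparison along the line $\ell(E,D)$ then gives $D_{DE}\subseteq D_{ED^{*}}$ whenever $D$ lies on segment $ED^{*}$, and a monotonicity argument handles any remaining positions of $D$ in the wedge. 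The sub-case $\ell(E,D)\parallel\ell(P,O)$ (so $D^{*}$ is at infinity) is dispatched by a limit argument, using that $D_{DE}$ then lies entirely in a halfplane disjoint from $\mathcal{C}_P$.
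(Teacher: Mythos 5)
Your reduction in the second step is algebraically correct, but the trigonometric inequality you arrive at after substituting the lower bound for $p$ is false, and not merely in a delicate boundary regime: whenever $\lambda\cos\gamma\le 1$ (e.g.\ $\lambda$ close to $1$ and $\gamma$ close to $\phi$), the left-hand side $\lambda^2\sin^2\phi\,(\lambda^2\cos^2\gamma-1)$ is non-positive while the right-hand side $(\lambda^2-1)\sin^2(\phi-\gamma)$ is positive. Moreover, substituting a lower bound for $p$ into $p^2(\lambda^2\cos^2\gamma-1)$ is only legitimate when $\lambda^2\cos^2\gamma-1\ge 0$, which your retained hypotheses do not guarantee. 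In short, the only information you keep from the hypotheses --- $D_y\ge E_y$ together with $\theta<\phi$ --- provably does not imply the conclusion; the condition that $\ell(E,D)$ avoids the interior of $\mathcal{C}$ is not a tie-breaker for an extremal case but carries essentially the whole content of the lemma.

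The patch you sketch does not close this gap. Under the hypotheses of Lemma~\ref{lem:4} the point $D^{*}=\ell(E,D)\cap h(P,O)$ never exists: the forward halfline $h(E,D)$ misses $h(P,O)$ by assumption, and since your own first step gives $D_y\ge E_y$ while $D$ has smaller polar angle than $E$, the backward extension of $ED$ beyond $E$ reaches the axis (if at all) only at a point of negative abscissa, behind $P$. (Equivalently: if $\ell(E,D)$ met $h(P,O)$ at a point $R$ while $D$ lies in the interior of the wedge, then $D$ would be on the segment $ER$ and $h(E,D)$ would contain $R$.) So there is nothing to which Lemmas~\ref{lem:2} and~\ref{lem:3} can be applied, and the unspecified ``monotonicity argument'' for the remaining positions of $D$ --- which is all of them --- is exactly the missing proof. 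For comparison, the paper argues via a separating line: it takes the point $E'$ on $h(P,T_1)\setminus PT_1$ whose second tangent to $\mathcal{C}$ is parallel to $\ell(E,D)$, and the line $\tau$ through $E'$ perpendicular to that direction; the computation $dist(P,\tau)=(a+b)\cos\beta\ge a$, which reduces to $\sin(\alpha-\beta)\ge 0$, shows that $\tau$ separates $\mathcal{C}_P$ from the slab containing $D_{DE}$. If you wish to salvage your computational route, you must first convert ``$\ell(E,D)$ misses the interior of $\mathcal{C}$'' into a quantitative lower bound on $p$ (or on $\lambda\cos\gamma$) and carry that bound through the algebra.
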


\begin{figure}[t]
	\centering
	\includegraphics[scale=0.75,page=5]{img.pdf}
	\caption{\small{Illustration of Lemma~\ref{lem:4}}.}
	\label{fig:lem4}
\end{figure}

\begin{proof}
Let $\alpha\in(0,\pi/2)$ denote the angle formed by $h(P,E)$ and $h(P,O)$,
and $\beta\in(0,\alpha]$ the angle formed by $h(P,E)$ and $\ell(E,D)$ (see Figure~\ref{fig:lem4}).
Let $E'$ be the point in $h(P,T_1)\setminus PT_1$ such that the line different from $\ell(P,E')$ containing
$E'$ and tangent to $\mathcal{C}$, denoted $\gamma$, is parallel to the line $\ell(E,D)$. Let
$\tau$ be the line perpendicular to $\gamma$ that contains $E'$,
and $T_3$ denote the point of tangency between $\gamma$ and $\mathcal{C}$.
Let $a=\seg{PT_1}=\seg{PT_2}$, $b=\seg{E'T_1}=\seg{E'T_3}$, and $r$ denote the radius of $\mathcal{C}$.
Since the lines $\gamma$ and $\tau$ are perpendicular, the angle formed by the lines $\ell(P,E')$ and
$\tau$ is equal to $\pi/2 - \beta$. Then, the distance $dist(P,\tau)$ from the point $P$ to the line $\tau$ satisfies
\[
	dist(P,\tau) ~=~ \seg{PE'}\cdot\sin(\pi/2 - \beta) ~=~ (a+b)\cdot\cos\beta.
\]
Note that $\angle T_1E'T_3=\pi-\beta$. Then, since the line $\ell(O,E')$ bisects the angle $\angle T_1E'T_3$,
we have that $\angle T_1E'O=\pi/2-\beta/2$, which implies that 
\[
	b= r\cdot \cot(\angle T_1E'O)=r\cdot \tan(\beta/2)
\]
because the segment $OT_1$ satisfying $\seg{OT_1}=r$ is perpendicular to the line $\ell(P,E')$.
On the other hand, note that $a=r\cdot\cot\alpha$.
Putting the above observations together, the next inequalities 
\begin{eqnarray*}
	\seg{PT_1} & \le & dist(P,\tau)\\
	a & \le & (a+b)\cdot\cos \beta\\
	r\cdot \cot\alpha & \le & (r\cdot \cot\alpha + r\cdot \tan(\beta/2))\cdot \cos\beta\\
	\cot \alpha & \le & \frac{\cos\beta\cdot\tan(\beta/2)}{1-\cos\beta} ~=~
				\frac{\cos\beta\cdot \frac{\sin(\beta/2)}{\cos(\beta/2)}}{2\sin^2(\beta/2)} ~=~
				\frac{\cos\beta}{2\sin(\beta/2)\cos(\beta/2)}\\
	\frac{\cos\alpha}{\sin\alpha} & \le & \frac{\cos\beta}{\sin\beta}\\
	0 & \le & \sin(\alpha - \beta)
\end{eqnarray*}
are all equivalent and hold given that $\beta>0$ and $0\le \alpha-\beta < \alpha < \pi/2$.
Since by construction the line $\tau$ either does not intersect the disk $D_{DE}$
or is tangent to $D_{DE}$ at the point $E'\neq T_1$, we can guarantee that
the disk $D_{DE}$ does not intersect the disk with center $P$ and radius $\seg{PT_1}=\seg{PT_2}$.
The lemma thus follows.
\end{proof}

\begin{lemma}\label{lem:5}
Let $\mathcal{C}$ be a disk centered at the point $O$, and let $P$ be a point not contained in $\mathcal{C}$.
Let $T_1$ and $T_2$ be the points of the boundary of $\mathcal{C}$ such that the lines
$\ell(P,T_1)$ and $\ell(P,T_2)$ are tangents to $\mathcal{C}$.
Let $D$ be a point in the halfline $h(P,O)\setminus PO$ and
$E$ a point in the interior of the convex wedge bounded by $h(P,T_1)$ and $h(P,O)$
such that $h(D,E)$ does not intersect with $h(P,T_1)$ (see Figure~\ref{fig:lem5}).
Then, the disk $D_{DE}$ does not
intersect the disk $\mathcal{C}_P$ with center $P$ and radius $\seg{PT_1}=\seg{PT_2}$.
\end{lemma}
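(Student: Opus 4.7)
The plan is a direct trigonometric calculation, in the same spirit as the proof of Lemma~\ref{lem:4}. Place Cartesian coordinates with $P$ at the origin and $h(P,O)$ along the positive $x$-axis, let $\alpha=\angle T_1PO\in(0,\pi/2)$ so that $h(P,T_1)$ is the ray at angle $\alpha$, and set $a=\seg{PT_1}$, $o=\seg{PO}$, with $r$ the radius of $\mathcal{C}$; thus $\cos\alpha=a/o$ and $\sin\alpha=r/o$. Write $D=(d,0)$ with $d>o$, and parametrize $E=D+t(\cos\phi,\sin\phi)$, where $t=\seg{DE}>0$ and $\phi\in(0,\pi)$ is the angle that the direction from $D$ to $E$ makes with the positive $x$-axis.

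First I would translate the hypothesis into an angular condition on $\phi$. Parametrizing $h(D,E)$ and $h(P,T_1)$ and equating coordinates, an elementary computation yields $st\sin(\phi-\alpha)=d\sin\alpha$ for the parameter $s\ge 0$ that would locate an intersection on $h(D,E)$. Since the right-hand side is strictly positive, a valid $s\ge 0$ exists precisely when $\sin(\phi-\alpha)>0$, i.e., when $\phi>\alpha$. Hence the hypothesis $h(D,E)\cap h(P,T_1)=\emptyset$ is equivalent to $\phi\le\alpha$; geometrically, $h(D,E)$ climbs no faster than the direction parallel to $h(P,T_1)$ and so never catches it. The same inequality $\phi\le\alpha$, together with $d>o$ and $t>0$, automatically places $E$ in the open wedge, so the wedge requirement is subsumed.

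Next, let $M$ denote the midpoint of $DE$. Applying the law of cosines to $\Delta PDM$ (with $\angle PDM=\pi-\phi$ and $\seg{DM}=t/2$) and subtracting $(a+t/2)^2$ yields
\[
	\seg{PM}^2 - \left(a + \tfrac{t}{2}\right)^2 ~=~ (d^2 - a^2) ~+~ t\,(d\cos\phi - a).
\]
Since $\phi\le\alpha$ gives $\cos\phi\ge\cos\alpha=a/o$ and $d>o$, one has $d\cos\phi\ge d\cdot a/o>a$; both summands on the right-hand side are therefore strictly positive. Hence $\seg{PM}>a+t/2$, which is exactly the required disjointness of $D_{DE}$ and $\mathcal{C}_P$.

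The only delicate point is the angular translation carried out in the first step; once it is in hand, the conclusion reduces to the single identity above. Note that, unlike in Lemmas~\ref{lem:2}--\ref{lem:4}, there is no hypothesis that $\ell(E,D)$ misses the interior of $\mathcal{C}$, and this causes no difficulty: the estimate depends only on $\seg{PD}$ and the angle $\phi$, so configurations in which $\ell(E,D)$ happens to cut through $\mathcal{C}$ are absorbed into the same inequality.
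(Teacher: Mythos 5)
Your proof is correct. It is, however, organized differently from the paper's. The paper argues by a separating line: it takes the line $\tau$ through $D$ perpendicular to $DE$, observes that $D_{DE}$ lies in the closed halfplane bounded by $\tau$ on the side of $E$, and shows that $P$ lies on the other side at distance at least $\seg{PD'}$ from $\tau$, where $D'$ is the orthogonal projection of $D$ onto $h(P,T_1)$; since $\seg{PD'}>\seg{PT_1}$, the disk $\mathcal{C}_P$ stays strictly on the $P$-side of $\tau$ and the two disks are disjoint. In your coordinates $dist(P,\tau)=d\cos\phi$ and $\seg{PD'}=d\cos\alpha$, so the paper's lower bound $dist(P,\tau)\ge\seg{PD'}$ is precisely your angular condition $\phi\le\alpha$, and its inequality $\seg{PD'}>\seg{PT_1}$ is your $d\cos\alpha>a$. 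Your law-of-cosines identity repackages this same estimate, together with the additional positive term $d^2-a^2$, into a direct comparison of the center distance $\seg{PM}$ with the sum of the radii $a+t/2$. What your version buys is self-containedness: you make explicit both the translation of the hypothesis $h(D,E)\cap h(P,T_1)=\emptyset$ into $\phi\le\alpha$ (which the paper compresses into the phrase ``by the definition of $E$'') and the fact that $P$ and $D_{DE}$ lie on opposite sides of the relevant separating structure (which in the paper's argument tacitly requires $\angle PDE\ge\pi/2$, i.e., $\phi\le\alpha<\pi/2$). The paper's version is shorter and more visibly geometric. Both are complete and elementary.
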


\begin{proof}
Let $\tau$ be the line through $D$ that is perpendicular to $DE$.
Let $D'$ be the orthogonal projection of $D$ into $h(P,T_1)$, that is,
lines $\ell(D,D')$ and $\ell(P,T_1)$ are perpendicular at $D'$. By the definition of $E$, 
the distante from $P$ to $\tau$ is at least $\seg{PD'}$ and at most $\seg{PD}$.
Since $\seg{PT_1}<\seg{PD}$, then the disks $D_{DE}$ and $\mathcal{C}_P$ do not intersect.
\end{proof}

\begin{figure}[t]
	\centering
	\includegraphics[scale=0.65,page=25]{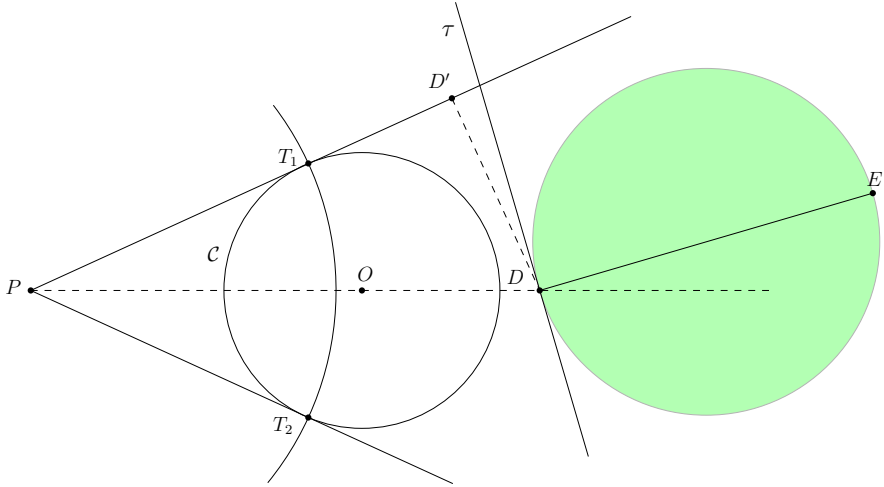}
	\caption{\small{Illustration of Lemma~\ref{lem:5}}.}
	\label{fig:lem5}
\end{figure}

\begin{figure}[t]
	\centering
	\subfloat[]{
		\includegraphics[scale=1.1,trim=6.8cm 17.5cm 8.8cm 6.3cm,clip]{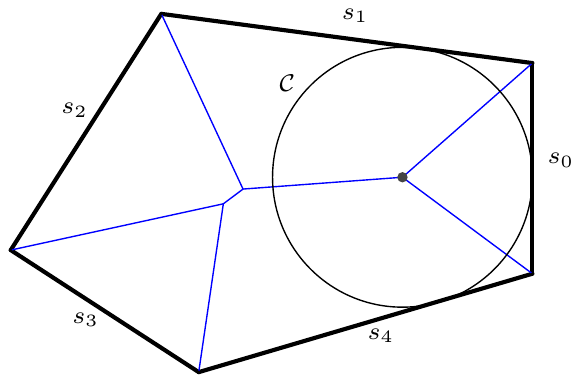}
		\label{fig:medial-axis}
	}
	\subfloat[]{
		\includegraphics[scale=1.1,trim=8cm 17cm 7cm 6.2cm,clip]{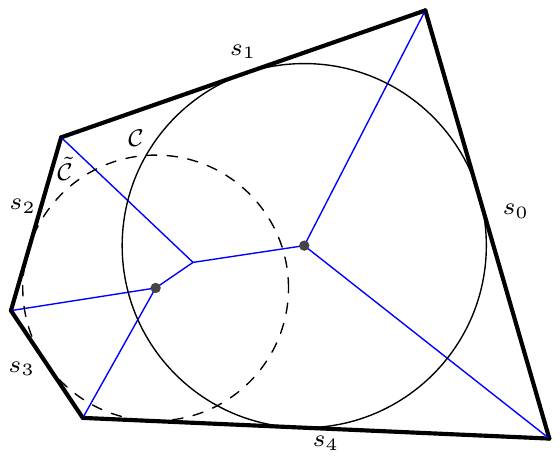}
		\label{fig:case2}
	}\\
	\subfloat[]{
		\includegraphics[scale=1.1,trim=7.8cm 18cm 6.2cm 5.5cm,clip]{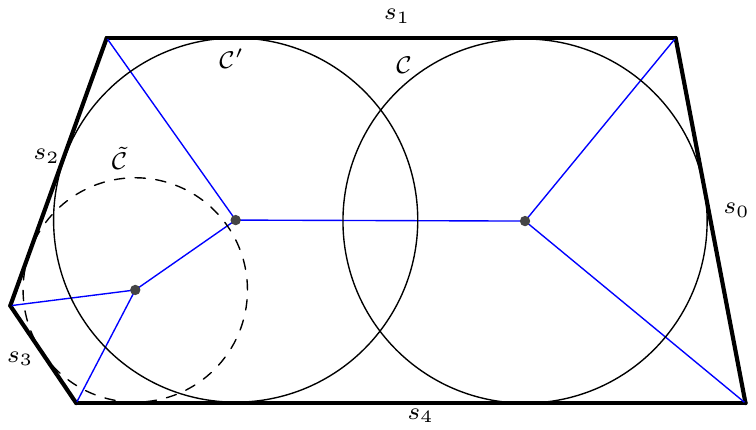}
		\label{fig:case3}
	}
	\captionsetup{justification=justified,singlelinecheck=true}
	\caption{\small{
		Proof of Lemma~\ref{lem:disk}.
		(a) The medial axis of a convex pentagon with sides $s_0$, $s_1$, $s_2$, $s_3$, $s_4$, and
		a disk $\mathcal{C}$ tangent to the sides $s_4$, $s_0$, and $s_1$ and centered at a vertex 
		of the medial axis.
		(b) If $\ell(s_4)$ and $\ell(s_1)$ are not parallel and their intersection point and
		the interior of $\mathcal{C}$ are at the same halfplane bounded by $\ell(s_0)$, and $\mathcal{C}$ 
		is not tangent to both $s_2$ and $s_3$, then there exists a disk $\tilde{\mathcal{C}}$ with
		smaller radius tangent to three consecutive sides.
		(c) If $\ell(s_4)$ and $\ell(s_1)$ are parallel and $\mathcal{C}'$ is not tangent to $s_3$,
		then there exists a disk $\tilde{\mathcal{C}}$ with
		smaller radius tangent to three consecutive sides.
	}}
	\label{fig:maxis}
\end{figure}

\begin{lemma}\label{lem:disk}
Any (unbounded) convex $n$-gon, $n\ge 5$, contains a disk $\mathcal{C}$ tangent to three consecutive sides, 
such that: the lines containing the first 
and third sides, respectively, are not parallel and further their intersection point and
the interior of $\mathcal{C}$ belong to different halfplanes bounded by the line containing the second side.
\end{lemma}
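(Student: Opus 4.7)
The proof proceeds by a minimum-radius argument. I first argue that inscribed disks tangent to three consecutive sides exist. For each $d \geq 0$, let $P_n^{(d)}$ be the convex set obtained by shifting every supporting line $\ell(s_i)$ of $P_n$ inward by distance $d$ and intersecting the corresponding half-planes. For each side $s_i$ that is a segment, the length of the corresponding shifted side $s_i^{(d)}$ decreases linearly in $d$ until some first value $d_i^*$ at which it collapses to a point; at that value, the three shifted lines for $s_{i-1}, s_i, s_{i+1}$ are concurrent at a point equidistant $d_i^*$ from the three sides. Taking $i$ that minimises $d_i^*$, no side of $P_n^{(d_i^*)}$ has yet vanished, so $P_n^{(d_i^*)}$ is non-empty; hence the concurrence point lies inside $P_n$, and the disk of radius $d_i^*$ around it is inscribed in $P_n$ and tangent to the three consecutive sides $s_{i-1}, s_i, s_{i+1}$. (As shown in Figure~\ref{fig:medial-axis}, such a disk corresponds to a vertex of the medial axis of $P_n$.)

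Now let $\mathcal{C}$ be an inscribed disk of $P_n$ of minimum radius among all disks tangent to three consecutive sides, say $s_{j-1}, s_j, s_{j+1}$. I claim that $\mathcal{C}$ satisfies the condition of the lemma. A short angle computation, setting $\ell(s_j)$ as the $x$-axis with the interior of $P_n$ in the upper half-plane, shows that the required geometric condition (that $\ell(s_{j-1})$ and $\ell(s_{j+1})$ are not parallel and their intersection lies on the opposite side of $\ell(s_j)$ from $\mathcal{C}$) is equivalent to the two interior angles of $P_n$ at the endpoints of $s_j$ summing to strictly more than $\pi$. Assume for contradiction that this fails. I will construct a strictly smaller disk tangent to three consecutive sides, contradicting minimality.

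If $\ell(s_{j-1})$ and $\ell(s_{j+1})$ meet at a point $P$ on the same side of $\ell(s_j)$ as the interior of $\mathcal{C}$ (Figure~\ref{fig:case2}), then the center of $\mathcal{C}$ lies on the angle bisector at $P$ between $\ell(s_{j-1})$ and $\ell(s_{j+1})$. Slide the center along this bisector toward $P$: the resulting family of disks stays tangent to $\ell(s_{j-1})$ and $\ell(s_{j+1})$, strictly shrinks in radius, and moves away from $\ell(s_j)$. Continue until the disk first touches another side of $P_n$; by the local geometry the new tangency must be with $s_{j-2}$ or $s_{j+2}$. In the first case, a small further perturbation that keeps tangency with $s_{j-2}$ and $s_{j-1}$ while sliding toward $s_j$ yields a disk $\tilde{\mathcal{C}}$ tangent to the three consecutive sides $s_{j-2}, s_{j-1}, s_j$; the other case is symmetric. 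The parallel case $\ell(s_{j-1}) \parallel \ell(s_{j+1})$ (Figure~\ref{fig:case3}) is handled analogously by translating $\mathcal{C}$ along the common direction of the two parallel lines, producing an intermediate disk $\mathcal{C}'$ tangent to a new side and then similarly modifying to obtain $\tilde{\mathcal{C}}$. In every situation $\tilde{\mathcal{C}}$ has strictly smaller radius than $\mathcal{C}$, contradicting minimality. The main obstacle will be rigorously verifying which side first obstructs the sliding disk, and treating the degenerate sub-cases singled out in the figure captions ($\mathcal{C}$ already tangent to $s_{j-2}$ and/or $s_{j+2}$), which need a separate local argument.
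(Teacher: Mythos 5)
Your setup (existence of a disk tangent to three consecutive sides via an inward offset / medial-axis argument, then taking one of minimum radius) matches the paper, and your observation that the required condition on the triple $s_{j-1},s_j,s_{j+1}$ is equivalent to the two interior angles at the endpoints of $s_j$ summing to more than $\pi$ is correct. The gap is in your contradiction step. When the condition fails, you propose to shrink $\mathcal{C}$ homothetically towards $P=\ell(s_{j-1})\cap\ell(s_{j+1})$ until it ``first touches another side'' and thereby manufacture a strictly smaller disk tangent to three consecutive sides. But every shrunk disk lies in $\mathrm{conv}(\{P\}\cup\mathcal{C})$, so if $P\in P_n$ the family never touches any other side and the process terminates in nothing. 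Worse, the contradiction you are aiming for can be genuinely unreachable: if $P_n$ is tangential (all sides tangent to $\mathcal{C}$), then every angle bisector passes through the incenter, so $\mathcal{C}$ is the \emph{only} disk in $P_n$ tangent to three consecutive sides, and no strictly smaller one exists. Such polygons can violate the condition for the chosen triple --- e.g.\ a tangential pentagon with interior angles $(0.45\pi,0.45\pi,0.7\pi,0.7\pi,0.7\pi)$ has $\alpha_0+\alpha_1<\pi$. So ``assume the condition fails and contradict minimality'' cannot work as a global strategy; this is not one of the ``degenerate sub-cases'' you defer at the end, but the central obstruction.

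The paper resolves this differently: in the failure cases it does \emph{not} seek a contradiction, but argues that minimality forces $\mathcal{C}$ to be tangent to all the remaining sides $s_{j+2},\dots,s_{j-2}$ as well, and then exhibits a \emph{different} triple of consecutive sides, tangent to the \emph{same} disk $\mathcal{C}$, for which the condition holds. (Your own angle reformulation shows why such a triple must exist: the interior angles of a convex $n$-gon sum to $(n-2)\pi$, so $\sum_i(\alpha_i+\alpha_{i+1})=2(n-2)\pi>n\pi$ for $n\ge 5$, hence some consecutive pair of angles exceeds $\pi$; one then has to check this can be arranged for a pair adjacent to a side tangent to $\mathcal{C}$ on both neighbours.) Separately, even in the cases where your sliding disk does meet another side, the assertion that the first new tangency must be with $s_{j-2}$ or $s_{j+2}$, and the subsequent ``small further perturbation'' yielding a smaller disk tangent to $s_{j-2},s_{j-1},s_j$ inside $P_n$, are both unproved and not obvious; the disk you obtain at first contact is tangent to three sides that are not consecutive, and converting it to one tangent to three consecutive sides of still smaller radius requires an argument.
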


\begin{proof}
Let $P_n$ be a convex $n$-gon with sides denoted $s_0,s_1,\ldots,s_{n-1}$ counter-clockwise. 
In the following, every disk will be considered
to be contained in $P_n$, and for every side $s$, let $\ell(s)$ denote
the line containing $s$. There exist disks tangent to three consecutive sides and
centered at a vertex of the medial axis of $P_n$~\cite{preparata1977}.
The medial axis of a simple polygon is the locus 
of the points of the polygon that have more than one closest point in the boundary.
If the polygon is convex, the medial axis is a tree made of line segments, each contained in the bisector
of two sides (see Figure~\ref{fig:medial-axis}).
Then, let $\mathcal{C}$ be a disk of minimum radius among those disks, tangent
to the sides $s_{n-1}$, $s_0$, and $s_1$ w.l.o.g.\ 
If $\ell(s_{n-1})$ and $\ell(s_{1})$ 
are not parallel and their intersection point and the interior of $\mathcal{C}$ 
are at different halfplanes bounded by $\ell(s_0)$ (see Figure~\ref{fig:medial-axis}), 
then the lemma is proved.
Otherwise, if $\ell(s_{n-1})$ and $\ell(s_{1})$ 
are not parallel and their intersection point and the interior of $\mathcal{C}$ 
are at the same halfplane bounded by $\ell(s_0)$  (see Figure~\ref{fig:case2}),
then by the minimality of $\mathcal{C}$ 
every side among $s_{2},s_{3},\ldots,s_{n-2}$ must be tangent to $\mathcal{C}$,
which implies that every triple of consecutive sides among $s_{1},s_{2},\ldots,s_{n-1}$
together with $\mathcal{C}$ satisfy the conditions of the lemma.
Finally, if $\ell(s_{n-1})$ and $\ell(s_{1})$ are parallel (see Figure~\ref{fig:case3}), 
then by the minimality of $\mathcal{C}$
the disk $\mathcal{C'}$ with radius equal to that of $\mathcal{C}$ and tangent
to the sides $s_1$ and $s_{n-1}$, and to at least one side $s_i$ for some $i\in[2\dots n-2]$, must be tangent
to all the sides $s_{2},s_{3},\ldots,s_{n-2}$. Since $n\ge 5$, 
every triple of consecutive sides among $s_{1},s_{2},\ldots,s_{n-1}$ and the disk $\mathcal{C'}$ prove the lemma.
\end{proof}

\begin{figure}[t]
	\centering
	\includegraphics[scale=0.75,page=26]{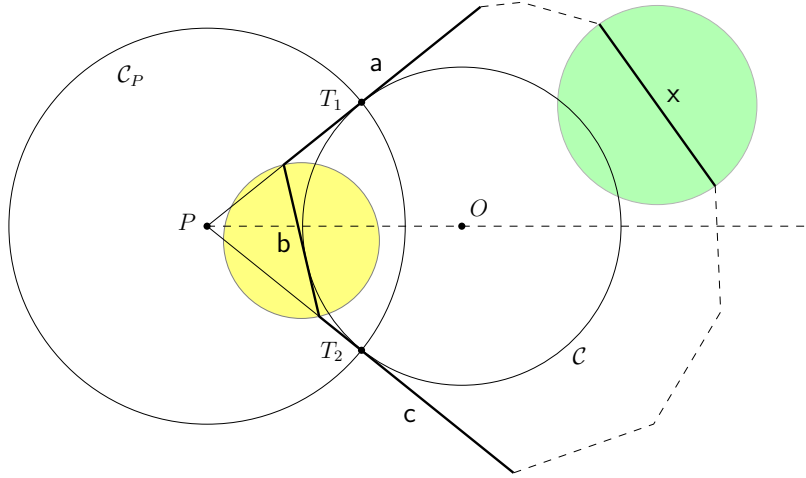}
	\caption{\small{Proof of Lemma~\ref{lem:abcx}}.}
	\label{fig:abcx}
\end{figure}

\begin{lemma}\label{lem:abcx}
Let $P_n$ be an (unbounded) convex $n$-gon, and $\mathsf{a}$, $\mathsf{b}$, and $\mathsf{c}$ three consecutive sides of $P_n$ such that: the lines $\ell(\mathsf{a})$ and $\ell(\mathsf{c})$ intersect at point $P$, the line $\ell(\mathsf{b})$ separates the interior of $P_n$ and $P$, and there exists a disk $\mathcal{C}$ with center $O$, contained in $P_n$, and tangent to $\mathsf{a}$, $\mathsf{b}$, and $\mathsf{c}$. Then, for any side $\mathsf{x}\notin\{\mathsf{a},\mathsf{b},\mathsf{c}\}$ of $P_n$ such that the bisector $\ell(P,O)$ of $\mathsf{a}$ and $\mathsf{c}$ does not intersect the interior of $\mathsf{x}$, we have that $D_\mathsf{b}$ and $D_\mathsf{x}$ do not intersect (see Figure~\ref{fig:abcx}).
\end{lemma}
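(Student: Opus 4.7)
The plan is to first confine $D_\mathsf{b}$ inside a disk $\mathcal{C}_P$ centered at $P$ via Lemma~\ref{lem:1}, and then rule out any intersection between $D_\mathsf{x}$ and $\mathcal{C}_P$ by case analysis using Lemmas~\ref{lem:2}--\ref{lem:5}. I would first let $T_1,T_2$ be the tangency points of $\mathcal{C}$ with $\mathsf{a}$ and $\mathsf{c}$; the hypothesis that $\ell(\mathsf{b})$ separates $P$ from the interior of $P_n$ places $P$ outside $\mathcal{C}$ and forces $\ell(\mathsf{a}),\ell(\mathsf{c})$ to be the two tangent lines from $P$ to $\mathcal{C}$, touching at $T_1,T_2$. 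Letting $A,B$ be the common vertices of $\mathsf{a},\mathsf{b}$ and of $\mathsf{b},\mathsf{c}$ respectively, one has $A\in PT_1$, $B\in PT_2$, and $\mathsf{b}=AB$ tangent to $\mathcal{C}$; Lemma~\ref{lem:1} then gives $D_\mathsf{b}\subseteq \mathcal{C}_P$, where $\mathcal{C}_P$ is the disk centered at $P$ with radius $\seg{PT_1}=\seg{PT_2}$. It therefore suffices to show $D_\mathsf{x}\cap\mathcal{C}_P=\emptyset$.

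For this, I would combine the hypothesis that $\ell(P,O)$ avoids the interior of $\mathsf{x}$ with the reflection symmetry of $\mathcal{C}_P$ across $\ell(P,O)$ to assume without loss of generality that $\mathsf{x}$ lies in the closed wedge $W$ bounded by the rays $h(P,T_1)$ and $h(P,O)$; also, since $\mathcal{C}\subset P_n$ and $\ell(\mathsf{x})$ is a supporting line of $P_n$, the line $\ell(\mathsf{x})$ does not cross the interior of $\mathcal{C}$. Let $D,E$ be the endpoints of $\mathsf{x}$, labeled so that $E$ is at most as far from $\ell(\mathsf{a})$ as $D$. The case analysis then proceeds as follows: if $E\in h(P,T_1)\setminus PT_1$ and $D\in h(P,O)$, apply Lemma~\ref{lem:2} (when $\angle EDP\le\pi/2$) or Lemma~\ref{lem:3} (otherwise); if one endpoint lies on a boundary ray of $W$ and the other is interior, with the appropriate ``non-crossing'' condition of Lemma~\ref{lem:4} or~\ref{lem:5} met, apply that lemma. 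In every remaining configuration (in particular when both endpoints are interior to $W$, or when the non-crossing condition fails) I would extend $\mathsf{x}$ along $\ell(\mathsf{x})$ to the intersection points $E'\in h(P,T_1)\setminus PT_1$ and $D'\in h(P,O)$ with the two boundary rays of $W$, and apply Lemma~\ref{lem:2} or~\ref{lem:3} to $\widetilde{\mathsf{x}}=E'D'$; the elementary fact that $D_s\subseteq D_{s'}$ for collinear segments $s\subseteq s'$ then yields $D_\mathsf{x}\subseteq D_{\widetilde{\mathsf{x}}}$, which finishes the argument.

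The main obstacle is ensuring that this extension really does land on $h(P,T_1)\setminus PT_1$ and on $h(P,O)$ rather than on the segment $PT_1$ or on the opposite rays. The key geometric observation I would use is that $\ell(\mathsf{x})$ cannot meet $\ell(\mathsf{a})$ strictly between $P$ and $A$: any such line that also avoids the interior of $\mathcal{C}$ would separate $\mathcal{C}$ from the vertex $B$ of $P_n$, contradicting $\ell(\mathsf{x})$ being a supporting line of the convex polygon $P_n$ containing both $\mathcal{C}$ and $B$. A symmetric argument controls the intersection with $\ell(P,O)$. These separation observations, together with the fact that only the two vertices of $\mathsf{a}$ can lie on $\ell(\mathsf{a})$ (and $A$ itself is shared with $\mathsf{b}$, so cannot be an endpoint of $\mathsf{x}$), close the case analysis.
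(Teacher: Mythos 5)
Your proposal is correct and follows essentially the same route as the paper: confine $D_\mathsf{b}$ inside $\mathcal{C}_P$ via Lemma~\ref{lem:1}, reduce by the symmetry of $\mathcal{C}_P$ about $\ell(P,O)$ to the wedge bounded by $h(P,T_1)$ and $h(P,O)$, and then eliminate any intersection of $D_\mathsf{x}$ with $\mathcal{C}_P$ by applying one of Lemmas~\ref{lem:2}--\ref{lem:5} to a segment $DE\supseteq\mathsf{x}$. The paper's own proof is in fact terser than yours (it just says to use Lemma~\ref{lem:2}, \ref{lem:3}, \ref{lem:4}, or \ref{lem:5} ``by considering $\mathsf{x}\subseteq DE$''), so your explicit case split and your justification that the extension of $\ell(\mathsf{x})$ meets the correct rays are welcome added detail; the only loose end is the degenerate situation where $\ell(\mathsf{x})$ is parallel to one of the two bounding rays, in which case one extends $\mathsf{x}$ toward the single ray it does meet and falls back on Lemma~\ref{lem:4} or~\ref{lem:5} rather than on Lemmas~\ref{lem:2}--\ref{lem:3}.
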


\begin{proof}
Let $T_1$ and $T_2$ denote the points of tangency between $\mathcal{C}$ and the sides $\mathsf{a}$ and $\mathsf{c}$, respectively, and $\mathcal{C}_P$ the disk with center $P$ and radius $\seg{PT_1}=\seg{PT_2}$. Assume w.l.o.g.\ that $\mathsf{x}$ is contained in the convex wedge bounded by $h(P,T_1)$ and $h(P,O)$. By Lemma~\ref{lem:1}, we have that $D_\mathsf{b}\subset \mathcal{C}_P$. Furthermore, according to the relative position of $\mathsf{x}$ with respect to $h(P,T_1)$, $h(P,O)$, and $\mathcal{C}$, we can use Lemma~\ref{lem:2}, Lemma~\ref{lem:3}, Lemma~\ref{lem:4}, or Lemma~\ref{lem:5} by considering $\mathsf{x}\subseteq DE$ in every of them, to obtain that $\mathcal{C}_P \cap  D_\mathsf{x}$ is empty. Hence, we have that $D_\mathsf{b}$ and $D_\mathsf{x}$ do not intersect.
\end{proof}

\begin{proof}[Proof of 1-Chord lemma (Lemma~\ref{lem:atmost1})]
Using Lemma~\ref{lem:disk}, we can ensure that $P_n$ contains a disk $\mathcal{C}$ tangent to three consecutive sides, 
say the sides $s_{i-1}$, $s_i$, and $s_{i+1}$ for some $i\in\{0,1,\ldots,n-1\}$, such that: the lines $\ell(s_{i-1})$ and $\ell(s_{i+1})$ are not parallel, and their intersection point and
the interior of $\mathcal{C}$ belong to different halfplanes bounded by the line $\ell(s_i)$. The bisector of $s_{i-1}$ and $s_{i+1}$ will cross the interior of at most one side $s_j$ of the set $S=\{s_{0},s_{1},\ldots,s_{n-1}\}\setminus\{s_{i-1},s_{i},s_{i+1}\}$. For any other side $s_k\in S\setminus \{s_j\}$ we have that $D_i$ and $D_k$ do not intersect, by Lemma~\ref{lem:abcx}. The lemma thus follows.
\end{proof}

\section{Part II: Proof of No-3-Cycles lemma (Lemma~\ref{lem:no3cycles})}\label{sec:partII}

\begin{lemma}\label{lem:quad}
Let $ABCD$ be a convex quadrilateral with vertices $A$, $B$, $C$, and $D$, so that the lines 
$\ell(B,C)$ and $\ell(A,D)$ intersect at the point $P$, and the line $\ell(A,B)$ separates
$P$ and the interior of $ABCD$ (see Figure~\ref{fig:quad}(left)). The disk $\mathcal{C}$
with center $O$ is contained in $ABCD$ and tangent to the sides $AB$, $BC$, and $DA$, the
line $\ell(A,O)$ intersects the side $BC$, and the line $\ell(B,O)$ intersects the side $DA$.
Then, the disks $D_{AB}$ and $D_{CD}$ do not intersect.
\end{lemma}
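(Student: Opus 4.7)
The plan is first to apply Lemma~\ref{lem:1} at the wedge vertex $P$: with $T_1\in DA$ and $T_2\in BC$ the tangent points of $\mathcal{C}$ on the wedge rays, the hypotheses place $A\in PT_1$ and $B\in PT_2$ with $AB$ tangent to $\mathcal{C}$, so Lemma~\ref{lem:1} yields $D_{AB}\subseteq\mathcal{C}_P$, where $\mathcal{C}_P$ is the disk of radius $a:=|PT_1|=|PT_2|$ centered at $P$. It therefore suffices to prove $\mathcal{C}_P\cap D_{CD}=\emptyset$. I would then place $P$ at the origin with the bisector $\ell(P,O)$ along the positive $x$-axis and let $\alpha$ be the wedge half-angle. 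The combined hypothesis that $\ell(A,O)$ meets $BC$ and $\ell(B,O)$ meets $DA$ internally already forces $\alpha<\pi/4$ (each condition amounts to $2\tau_X\cos^2\alpha>1$ for $X\in\{A,B\}$ after coordinates, and $\tau_A,\tau_B<1$), whence $\angle CPD=2\alpha<\pi/2$ and $P\notin D_{CD}$. The disjointness is then equivalent to $|PM_{CD}|\geq a+|CD|/2$; Apollonius's theorem (Theorem~\ref{lem:median}) in $\triangle PCD$ combined with the law of cosines at $P$ gives $|PM_{CD}|^2-|CD|^2/4=|PC|\,|PD|\,\cos(2\alpha)$, so after squaring the claim reduces to the algebraic inequality
\[
|PC|^2\,|PD|^2\,\cos^2(2\alpha)+a^4\;\geq\;a^2\bigl(|PC|^2+|PD|^2\bigr). \qquad (\star)
\]

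Next I would translate the hypotheses algebraically. Writing $\tau_X:=|PX|/a$, tangency of $AB$ to $\mathcal{C}$ rewrites cleanly as $\tau_A\tau_B\cos^2\alpha=\tau_A+\tau_B-1$. Parametrising $\ell(A,O)$ from $A$ through $O$ and intersecting $\ell(B,C)$, the hypothesis $\ell(A,O)\cap BC\in BC$ becomes $\tau_C\geq\xi_0:=\tau_A\tau_B/(2\tau_A+\tau_B-2)$, and symmetrically $\tau_D\geq\eta_0:=\tau_A\tau_B/(\tau_A+2\tau_B-2)$. A short manipulation using the tangency relation and $\tau_A,\tau_B<1$ shows $\xi_0\cos(2\alpha)\geq 1$ and $\eta_0\cos(2\alpha)\geq 1$. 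The function $F(\tau_C,\tau_D):=\tau_C^2\tau_D^2\cos^2(2\alpha)-\tau_C^2-\tau_D^2+1$ (i.e.\ $(\star)$ divided by $a^4$) satisfies $\partial F/\partial\tau_C=2\tau_C(\tau_D^2\cos^2(2\alpha)-1)\geq 0$ whenever $\tau_D\geq\eta_0$, and symmetrically in $\tau_D$, so $F$ is non-decreasing in each variable on $\{\tau_C\geq\xi_0,\ \tau_D\geq\eta_0\}$. It therefore suffices to verify $(\star)$ at the corner $(\tau_C,\tau_D)=(\xi_0,\eta_0)$.

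The main obstacle is this corner inequality. Setting $q:=\tau_A\tau_B$, $U:=2\tau_A+\tau_B-2$, $V:=\tau_A+2\tau_B-2$, and $W:=q\cos(2\alpha)=2\tau_A+2\tau_B-2-q$, the corner case of $(\star)$ reads $N:=q^2W^2+(UV)^2-q^2(U^2+V^2)\geq 0$. In the symmetric case $\tau_A=\tau_B=p$ the expression factors as
\[
N\;=\;(p^2W-U^2)^2-\bigl(2pU(1-p)\bigr)^2 \;=\; (1-p)^2\,(8p-p^2-4)\,\bigl(p^2W-U^2+2pU(1-p)\bigr),
\]
with both non-trivial factors positive throughout the valid range $p\in(2/3,1)$; the factor $(1-p)^2$ reflects the tangency identity $(1-\tau_A)(1-\tau_B)=\tau_A\tau_B\sin^2\alpha$. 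In the general case I expect an analogous positive factorisation, again with $(1-\tau_A)(1-\tau_B)$ necessarily appearing (as it must, since $N\to 0$ when $\tau_A,\tau_B\to 1$), and with the tangency relation supplying the remaining identities needed to certify positivity. Pinning down this final algebraic step is the delicate part of the argument, but once established it yields $(\star)$ and hence the lemma.
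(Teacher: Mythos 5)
Your argument is sound up to, but not including, its final step, and that final step is the mathematical heart of the lemma. The application of Lemma~\ref{lem:1}, the identity $|PM_{CD}|^2-|CD|^2/4=|PC|\,|PD|\cos(2\alpha)$, the algebraic form of the tangency condition $\tau_A\tau_B\cos^2\alpha=\tau_A+\tau_B-1$, the values $\xi_0=\tau_A\tau_B/(2\tau_A+\tau_B-2)$ and $\eta_0=\tau_A\tau_B/(\tau_A+2\tau_B-2)$, the bounds $\xi_0\cos(2\alpha)\ge 1$ and $\eta_0\cos(2\alpha)\ge 1$ (each reduces to $\tau_A\le 1$ or $\tau_B\le 1$), and the monotonicity of $F$ on the box $\{\tau_C\ge\xi_0,\ \tau_D\ge\eta_0\}$ all check out. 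But the resulting corner inequality $N=q^2W^2+(UV)^2-q^2(U^2+V^2)\ge 0$ is only verified when $\tau_A=\tau_B$; for the general case you write that you ``expect an analogous positive factorisation.'' That is a conjecture, not a proof, and it is not a routine loose end: numerically $N$ becomes very small (e.g.\ $N\approx 2\cdot 10^{-4}$ at $\tau_A=0.999$, $\tau_B=0.6$), and it degenerates to the boundary identity $N=q^2(W-U)(W+U)$ with $W-U=\tau_B(1-\tau_A)$ as $V\to 0$, so any certificate of positivity must genuinely exploit the tangency relation. As written, the lemma is not proved. (A secondary quibble: disjointness of the closed disks needs the strict inequality $|PM_{CD}|>a+|CD|/2$, and your chain of $\ge$'s would only rule out overlap, not tangency.)

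For comparison, the paper takes a different route after the common first step $D_{AB}\subset\mathcal{C}_P$: it replaces $C$ and $D$ by the extreme admissible points $C'=\ell(A,O)\cap BC$ and $D'=\ell(B,O)\cap DA$ via a Thales argument (your monotonicity-of-$F$ reduction is a cleaner way to achieve the same worst-case reduction), and then shows that the line through $O$ and the tangency point $E$ on $BC$ separates $\mathcal{C}_P$ from $D_{C'D'}$. This turns the corner case into a purely trigonometric inequality in the angles $\alpha=\angle OPA$, $\beta=\angle OBA$, $\gamma=\angle OAB$ constrained by $\pi+2\alpha=2\beta+2\gamma$, $\beta>2\alpha$, $\gamma>2\alpha$, which is then closed using Jensen's inequality for the sine and the elementary bound $\tan\beta>3\tan\alpha$. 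If you want to complete your version, you either need to prove $N\ge0$ directly (presumably by substituting the tangency relation to eliminate $\alpha$ and exhibiting a sum-of-products-of-nonnegatives decomposition in $\tau_A,\tau_B$), or switch at the corner to a separating-line argument of the paper's type.
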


\begin{figure}[t]
	\centering
	\includegraphics[scale=0.8,page=15]{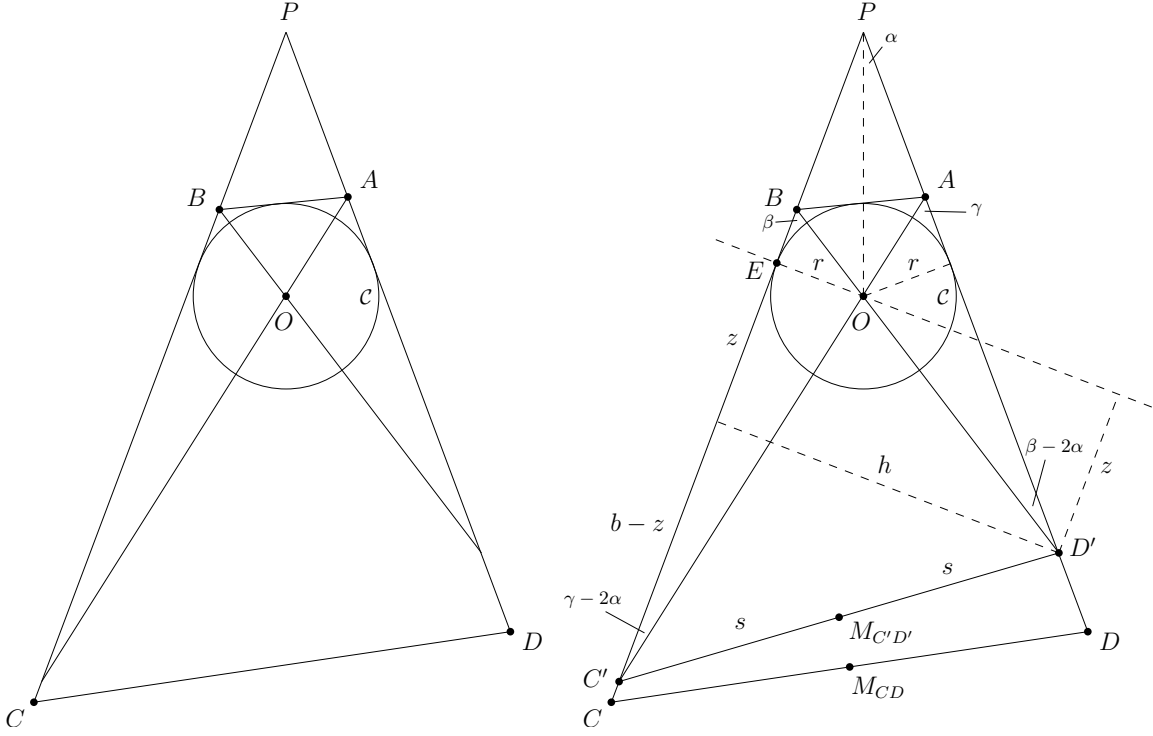}
	\caption{\small{Illustration (left) and proof (right) of Lemma~\ref{lem:quad}}.}
	\label{fig:quad}
\end{figure}

\begin{proof}
(Refer to Figure~\ref{lem:quad}(right) throughout the proof)
Let $E$ be the point of tangency between $\mathcal{C}$ and $BC$, 
and $C'=\ell(A,O) \cap BC$, and $D'=\ell(B,O) \cap DA$.
Let $r$ be the radius of $\mathcal{C}$, $b=\seg{C'E}$, $z=dist(D',\ell(E,O))$, 
$h=dist(D',\ell(P,C))$, $s=\seg{C'D'}/2$, $\alpha=\angle OPA=\angle OPB $,
$\beta = \angle OBA=\angle OBC$, and $\gamma =\angle OAD =\angle OAB$.
Assume w.l.o.g.\ that $\beta \ge \gamma$. Observe that $\gamma > 2\alpha$ since $\ell(A,O)$ intersects $BC$. 
Analogously, $\beta > 2\alpha$ since $\ell(B,O)$ intersects $DA$.
Note also that $\pi + 2\alpha = 2\beta + 2\gamma$.

The disk $\mathcal{C}_P$ with center $P$ and radius $\seg{PE}$ contains
$D_{AB}$, by Lemma~\ref{lem:1}. Then, it suffices to prove that $\mathcal{C}_P$ and $D_{CD}$
do not intersect, which follows by proving that $\mathcal{C}_P$ and $D_{C'D'}$
do not intersect. The reason of this last statement is that any point $Q$ of $D_{CD}$ in the triangle $\Delta PCD$ also belongs to $D_{C'D'}$.
Indeed, $Q\in D_{CD}\cap \Delta PCD$ implies that $\angle CQD\ge \pi/2$ by Thales' theorem, and we also have
$\angle C'QD' > \angle CQD$. Then, $Q$ also belongs to $D_{C'D'}$ by Thales' theorem.
We will
prove in the following that $\ell(E,O)$ separates the interior of $\mathcal{C}_P$
from the whole $D_{C'D'}$.

We need to prove that the radius $s$ of $D_{C'D'}$ is less than the distance
$dist(M_{C'D'},\ell(E,O))=(b+z)/2$, where $M_{C'D'}$ denotes the midpoint of $C'D'$.
That is, we need to show that $(b+z)^2 > (2s)^2$,
where $(2s)^2 = h^2 + (b-z)^2$. This is equivalent to proving that
$4bz>h^2$, with
\begin{align*} 
	b & ~=~  r\cdot \cot(\gamma-2\alpha)~=~ r\cdot \cot(\pi/2+\alpha-\beta-2\alpha)) ~=~ r\cdot\tan(\beta+\alpha), \\ 
	z & ~=~  \seg{D'O}\cdot\cos\beta ~=~ \left(\frac{r}{\sin(\beta-2\alpha)}\right)\cos\beta,
\end{align*}
and 
\[
	h ~=~ r + \seg{D'O}\cdot \sin\beta ~=~  r + \left(\frac{r}{\sin(\beta-2\alpha)}\right)\sin\beta.
\]
This is equivalent to verifying
\[
	4\cdot\frac{\tan(\beta+\alpha)\cos\beta}{\sin(\beta-2\alpha)} ~>~ \left(1 + \frac{\sin\beta}{\sin(\beta-2\alpha)}\right)^2.
\]
Since $2\alpha<\beta$ we have $\sin(\beta-2\alpha) >0$. On the other hand,
given that $\pi + 2\alpha = 2\beta + 2\gamma$ and $\gamma>2\alpha$, 
we have $\beta+\alpha <\beta+\gamma-\alpha=\pi/2$, and then $\cos(\beta+\alpha) > 0$.
Hence, the above inequation is equivalent to
\begin{equation}
	\label{eq1}
	4\cdot \sin(\beta+\alpha)\sin(\beta-2\alpha)\cos\beta ~>~ (\sin(\beta-2\alpha)+\sin\beta)^2\cos(\beta+\alpha).
\end{equation}
Since the sine function is concave in $[0,\pi/2]$, for all $x,y\in[0,\pi/2]$ we have
\[
	\frac{\sin x + \sin y}{2} ~\le~ \sin\left(\frac{x+y}{2}\right),
\]
by Jensen's innequality, and then
\[
	(\sin(\beta-2\alpha)+\sin\beta)^2 ~\le~ \left( 2\cdot\sin\left(\frac{\beta-2\alpha +\beta}{2} \right) \right)^2 
	~=~ 4\cdot\sin^2(\beta-\alpha).
\]
Hence, we have
\[
	 4\cdot \sin^2(\beta-\alpha)\cos(\beta+\alpha) ~\ge~ (\sin(\beta-2\alpha)+\sin\beta)^2\cos(\beta+\alpha).
\]
and then to prove inequation~\eqref{eq1} it suffices to prove 
\begin{equation}
	\label{eq2}
	\sin(\beta+\alpha)\sin(\beta-2\alpha)\cos\beta ~>~ \sin^2(\beta-\alpha)\cos(\beta+\alpha).
\end{equation}
Note that
\begin{eqnarray*}
	&   & \sin(\beta+\alpha)\sin(\beta-2\alpha)\cos\beta \\
	& = & (\sin\beta \cos\alpha + \cos\beta\sin\alpha) (\sin\beta\cos 2\alpha - \cos\beta\sin 2\alpha)\cos\beta \\
	& = & (\sin\beta \cos\alpha + \cos\beta\sin\alpha) (\sin\beta\cos^2\alpha - \sin\beta\sin^2\alpha - 2 \cos\beta\sin \alpha\cos\alpha)\cos\beta \\
	& = & \sin^2\beta\cos\beta\cos^3\alpha - \sin^2\beta\cos\beta\sin^2\alpha\cos\alpha - 2\sin\beta\cos^2\beta\sin\alpha\cos^2\alpha \\
	&   & + \sin\beta\cos^2\beta\sin\alpha\cos^2\alpha - \sin\beta\cos^2\beta\sin^3\alpha - 2\cos^3\beta\sin^2\alpha\cos\alpha,
\end{eqnarray*}
and
\begin{eqnarray*}
	&   & \sin^2(\beta-\alpha)\cos(\beta+\alpha) \\
	& = & (\sin\beta\cos\alpha - \cos\beta\sin\alpha)^2(\cos\beta\cos\alpha - \sin\beta\sin\alpha) \\
	& = & (\sin^2\beta\cos^2\alpha + \cos^2\beta\sin^2\alpha - 2\sin\beta\cos\beta\sin\alpha\cos\alpha)(\cos\beta\cos\alpha - \sin\beta\sin\alpha) \\
	& = & \sin^2\beta\cos\beta\cos^3\alpha + \cos^3\beta\sin^2\alpha\cos\alpha - 2\sin\beta\cos^2\beta\sin\alpha\cos^2\alpha \\
	&   & - \sin^3\beta\sin\alpha\cos^2\alpha - \sin\beta\cos^2\beta\sin^3\alpha + 2\sin^2\beta\cos\beta\sin^2\alpha\cos\alpha.
\end{eqnarray*}
Then, subtracting the above equations, we have
\begin{eqnarray*}
	&   & \sin (\beta+\alpha)\sin(\beta-2\alpha)\cos\beta - \sin^2(\beta-\alpha)\cos(\beta+\alpha)\\
	& = & - 3\cos^3\beta\sin^2\alpha\cos\alpha - 3\sin^2\beta\cos\beta\sin^2\alpha\cos\alpha + \sin^3\beta\sin\alpha\cos^2\alpha \\
	&   & + \sin\beta\cos^2\beta\sin\alpha\cos^2\alpha \\
	& = & \sin\alpha\cos\alpha\left(-3\cos\beta\sin\alpha(\cos^2\beta + \sin^2\beta) + \sin\beta\cos\alpha(\cos^2\beta + \sin^2\beta)\right) \\
	& = & \sin\alpha\cos\alpha\left(-3\cos\beta\sin\alpha + \sin\beta\cos\alpha\right).
\end{eqnarray*}
To prove inequation~\eqref{eq2}, it suffices to show that
\[
	-3\cos\beta\sin\alpha + \sin\beta\cos\alpha ~>~ 0,
\]
that is, $\tan\beta > 3\cdot \tan\alpha$. 
Given that $\pi + 2\alpha = 2\beta + 2\gamma > 8\alpha$, we have
$\alpha<\pi/6$. Furthermore, $\pi + 2\alpha = 2\beta + 2\gamma \le 4\beta$
implies $\beta  \ge \pi/4+\alpha/2$. Then, note that
\begin{eqnarray*}
	\tan\beta  & \ge & \tan\left(\pi/4+\alpha/2\right) ~=~ \frac{\sin(\pi/4+\alpha/2)}{\cos(\pi/4+\alpha/2)} 
			~=~ \frac{\cos(\alpha/2)+\sin(\alpha/2)}{\cos(\alpha/2)-\sin(\alpha/2)} \\
		& = & \frac{\left(\cos(\alpha/2)+\sin(\alpha/2)\right)^2}{\cos^2(\alpha/2)-\sin^2(\alpha/2)} ~=~ \frac{1+\sin\alpha}{\cos\alpha} 
			~>~ 3\cdot \frac{\sin\alpha}{\cos\alpha} ~=~ 3\cdot \tan\alpha,
\end{eqnarray*}
because $\sin\alpha < \sin (\pi/6) = 1/2$ given that $0< \alpha <\pi/6$.
\end{proof}

\begin{figure}[t]
	\centering
	\includegraphics[scale=0.75,page=11]{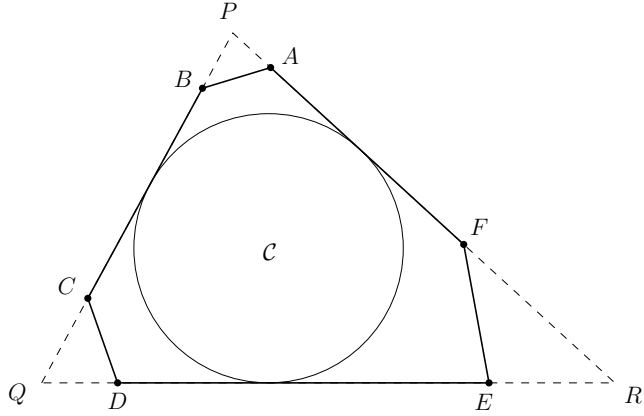}
	\caption{\small{Proof of Lemma~\ref{lem:3pairs}}.}
	\label{fig:3pairs1}
\end{figure}

\begin{lemma}\label{lem:3pairs}
Let $P$, $Q$, $R$, $A$, $B$, $C$, $D$, $E$, and $F$ be points defining the triangle $\Delta PQR$,
and the convex hexagon $ABCDEF$ inscribed in $PQR$ in the following manner: the points $B$ and
$C$ are in $PQ$, the points $D$ and $E$ are in $QR$, the points $F$ and $A$ are in $RP$,
and $ABCDEF$ contains the disk $\mathcal{C}$ incribed to $PQR$ in its interior. Furthermore, $\mathcal{C}$
is tangent to $BC$, $DE$, and $FA$ (see Figure~\ref{fig:3pairs1}).
Then, at least one of the following statements is satisfied:
\begin{itemize}
\item[(a)] $D_{AB}$ and $D_{DE}$ do not intersect.
\item[(b)] $D_{CD}$ and $D_{FA}$ do not intersect.
\item[(c)] $D_{EF}$ and $D_{BC}$ do not intersect.
\end{itemize}  
\end{lemma}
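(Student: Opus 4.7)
The plan is to apply Lemma~\ref{lem:quad} to three auxiliary convex quadrilaterals, one per statement, and to argue that the hypothesis of Lemma~\ref{lem:quad} can be verified in at least one of the three instances.

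For statement (a) I would use the convex quadrilateral $ABDE$, four of the hexagon's vertices in counter-clockwise order; its sides are the short side $AB$ of the hexagon (near the triangle vertex $P$), the hexagon diagonal $BD$, the long tangent chord $DE$ (on $QR$), and the diagonal $EA$. Under the identification $A\mapsto A'$, $B\mapsto B'$, $D\mapsto C'$, $E\mapsto D'$ in the notation of Lemma~\ref{lem:quad}, a successful application would yield $D_{AB}\cap D_{DE}=\emptyset$, i.e.\ statement (a). For (b) and (c) I would use the analogous quadrilaterals $CDFA$ and $EFBC$, respectively, each pairing a short side of the hexagon with the long tangent chord opposite to it.

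The first step is to verify the separation hypothesis for $ABDE$: the lines $\ell(B,D)$ and $\ell(E,A)$ meet at a point $P_a$ lying on the opposite side of $\ell(A,B)$ from the interior of $ABDE$. This is because $A$ and $B$ lie on the two sides of $\Delta PQR$ emanating from $P$, while $D,E$ lie on the opposite side $QR$; the diagonals $BD$ and $EA$, extended beyond $B$ and $A$ respectively, enter the wedge at $P$ across $\ell(A,B)$ where they meet. The disk required by Lemma~\ref{lem:quad} is then the excircle $\mathcal{E}_a$ of $\Delta P_a A B$ opposite $P_a$, which is tangent to $\ell(A,B)$, $\ell(B,D)$, and $\ell(E,A)$ from the side of $\ell(A,B)$ containing $DE$.

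The main obstacle is verifying the remaining hypotheses of Lemma~\ref{lem:quad}: the tangent points of $\mathcal{E}_a$ on $\ell(B,D)$ and $\ell(E,A)$ must lie within the segments $BD$ and $EA$ themselves (so that $\mathcal{E}_a$ is contained in $ABDE$), and the angle-bisector conditions, that $\ell(A,O_a)$ meets $BD$ and $\ell(B,O_a)$ meets $EA$, must hold, where $O_a$ is the center of $\mathcal{E}_a$. These conditions may fail for a given choice of quadrilateral, and the key claim I expect to be the delicate part is that they cannot fail simultaneously for all three of $ABDE$, $CDFA$, $EFBC$: the argument would translate each failure into an angle inequality at the corresponding triangle vertex $P$, $Q$, or $R$, and then derive a contradiction with $\angle P+\angle Q+\angle R=\pi$, using that $\mathcal{C}$ is the incircle of $\Delta PQR$ tangent to $BC$, $DE$, $FA$. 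The degenerate parallel-lines case (for instance the symmetric equilateral configuration, where all three quadrilaterals become parallelograms and the apex points $P_a$, $P_b$, $P_c$ lie at infinity) would be handled as a limit of the generic situation, where the angle conditions hold automatically.
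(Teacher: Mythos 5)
Your plan reduces Lemma~\ref{lem:3pairs} to Lemma~\ref{lem:quad}, but the step you defer --- that the hypotheses of Lemma~\ref{lem:quad} cannot fail for all three of $ABDE$, $CDFA$, $EFBC$ simultaneously --- is not a ``delicate part'' to be filled in later; it is the entire content of the lemma, and as stated your claim is false. In the regular hexagon inscribed in the equilateral triangle, the diagonals $BD$ and $EA$ are exactly parallel (and likewise, by the $120^\circ$ symmetry, for the other two quadrilaterals), so none of the three apex points $P_a$, $P_b$, $P_c$ exists and Lemma~\ref{lem:quad} is inapplicable to all three quadrilaterals at once. This is not an isolated degeneracy: in nearby configurations the two diagonals may meet on the $DE$ side of $\ell(A,B)$ rather than the $P$ side, which swaps the roles of the opposite sides in Lemma~\ref{lem:quad} and changes which conditions must be checked, and your orientation argument (``the diagonals enter the wedge at $P$'') does not establish which case occurs. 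Treating the parallel case ``as a limit'' also cannot work, because the conclusion you need is that two \emph{closed} disks are disjoint, a strict inequality that is not preserved under limits. Moreover, the failure modes you must rule out are not only the two bisector conditions: Lemma~\ref{lem:quad} requires the tangent disk to be \emph{contained} in the quadrilateral, i.e.\ the excircle $\mathcal{E}_a$ of $\Delta P_aAB$ must not cross the segment $DE$, and this containment is essentially the assertion that $AB$ and $DE$ are far apart --- precisely what is in question. In a configuration where only statement~(c) holds, your hypotheses necessarily fail for $ABDE$ and $CDFA$, and you would have to prove that all of them (apex on the correct side, excircle containment, both bisector conditions) then hold for $EFBC$; the sketched angle-sum contradiction at $P+Q+R=\pi$ does not engage with any of this.

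For comparison, the paper does something structurally different: it slides the six vertices along the three tangent chords, defining a one-parameter family $A(t)\cdots F(t)$ in which the chords $B(t)C(t)$, $D(t)E(t)$, $F(t)A(t)$ shrink while $\mathcal{C}$ stays tangent to them, and shows via the excircle tangent-length formula that the relevant gap function $G(t)$ is non-increasing, so the worst case is the extremal parameter $t^*$ at which $\mathcal{C}$ becomes tangent to one of the short sides, say $A(t^*)B(t^*)$. At that extremal configuration it separates one specific pair of disks directly, sandwiching $D_{B(t^*)C(t^*)}$ and $D_{E(t^*)F(t^*)}$ inside two auxiliary disks through the tangency points that have disjoint interiors, and the monotonicity transports the conclusion back to $t=0$. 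To salvage your route you would need an argument of comparable substance in place of the unproved claim, together with a genuine treatment of the parallel and wrong-side-apex cases.
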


\begin{figure}[t]
	\centering
	\includegraphics[scale=0.75,page=12]{img.pdf}
	\caption{\small{Proof of Lemma~\ref{lem:3pairs}}.}
	\label{fig:3pairs2}
\end{figure}

\begin{proof}
For $t\ge 0$, let $B(t),C(t)\in BC$, $D(t),E(t)\in DE$, and $F(t),A(t)\in FA$ be the six points such that 
$\seg{AA(t)}=\seg{BB(t)}=\seg{CC(t)}=\seg{DD(t)}=\seg{EE(t)}=\seg{FF(t)}=t$, and the hexagon $A(t)B(t)C(t)D(t)E(t)F(t)$
is convex and satisfies the same conditions as $ABCDEF$. Let $t^*$ denote the maximum possible value of $t$.
Let $\mathcal{E}_P(t)$ be the disk whose boundary is the excircle of the triangle $\Delta PB(t)A(t)$ that
is contained in $\Delta PQR$. Let $\mathcal{C}_P(t)$ denote the disk with center $P$ and
radius $\seg{PT(t)}$, where $T(t)$ denotes the point of tangency between $\mathcal{E}_P(t)$ and $PQ$ (see Figure~\ref{fig:3pairs2}
for the case $t=0$).
Analogously, we define the disk $\mathcal{C}_Q(t)$ centered at $Q$, and the disk $\mathcal{C}_R(t)$ centered at $R$.
Since $D_{AB}=D_{A(0)B(0)}$ is contained in $\mathcal{C}_P=\mathcal{C}_P(0)$ (Lemma~\ref{lem:1}),
to prove statement~(a) it suffices to prove that $\mathcal{C}_P$ and $D_{DE}=D_{D(0)E(0)}$ do not intersect,
which is equivalent to proving that
\begin{equation}\label{eq20}
	\seg{PM_{DE}} ~>~ \seg{PT} + \frac{\seg{DE}}{2},
\end{equation}
where $T=T(0)$, $M_{D(t)E(t)}$ is the midpoint of $D(t)E(t)$, and $M_{DE}=M_{D(0)E(0)}$.
For every $t\in[0,t^*]$, observe that 
$\seg{PM_{D(t)E(t)}}=\seg{PM_{D(0)E(0)}}=\seg{PM_{DE}}$ since $M_{D(t)E(t)}=M_{D(0)E(0)}=M_{DE}$.
Furthermore, 
$\seg{D(t)E(t)}=\seg{D(0)E(0)}-2t=\seg{DE}-2t$.
Given a triangle with sides $\mathsf{a}$, $\mathsf{b}$, and $\mathsf{c}$, and the excircle $\mathcal{E}$ of the triangle
tangent to $\mathsf{c}$ and to the extensions of $\mathsf{a}$ and $\mathsf{b}$, the segments of such extensions
connecting the common vertex of $\mathsf{a}$ and $\mathsf{b}$ with one point of tangency with $\mathcal{E}$ have
length $(1/2)(\seg{\mathsf{a}}+\seg{\mathsf{b}}+\seg{\mathsf{c}})$. Then, since $\mathcal{E}_P(t)$ is an excircle
of $\Delta PA(t)B(t)$, we also have
\begin{eqnarray*}
	\seg{PT(t)}  & = & \frac{1}{2}\left(\seg{PB(t)}+\seg{B(t)A(t)}+\seg{A(t)P}\right) \\
	         & = & \frac{1}{2}\left(\seg{PB(0)}+t +\seg{B(t)A(t)}+\seg{A(0)P}+t\right) \\
	         & = & \frac{1}{2}\left(\seg{PB} +\seg{B(t)A(t)}+\seg{AP}\right) + t.
\end{eqnarray*}
Consider the function $G:[0,t^*]\rightarrow\mathbb{R}$ defined as follows:
\[
	G(t) ~=~ \seg{PM_{D(t)E(t)}} - \seg{PT(t)} - \frac{\seg{D(t)E(t)}}{2},
\]
which satisfies
\begin{eqnarray*}
	G(t) & = & \seg{PM_{D(t)E(t)}} - \seg{PT(t)} - \frac{\seg{D(t)E(t)}}{2} \\
	     & = & \seg{PM_{DE}} - \frac{1}{2}\left(\seg{PB} +\seg{B(t)A(t)}+\seg{AP}\right) - t - \frac{\seg{DE}}{2} + t \\
	     & = & \seg{PM_{DE}} - \frac{1}{2}\left(\seg{PB} +\seg{B(t)A(t)}+\seg{AP}\right) - \frac{\seg{DE}}{2}.
\end{eqnarray*}
Since the function $\seg{B(t)A(t)}$ is increasing in the range $t\in[0,t^*]$, 
we have that
\[
	\seg{PM_{DE}} - \seg{PT} -  \frac{\seg{DE}}{2} ~=~ G(0) ~\ge~ G(t^*) 
		~=~ \seg{PM_{D(t^*)E(t^*)}} - \seg{PT(t^*)} -  \frac{\seg{D(t^*)E(t^*)}}{2}.
\]
Then, to prove inequation~\eqref{eq20} and then statement~(a), it suffices to show that $G(t^*)>0$, which is equivalent to showing
that $\mathcal{C}_P(t^*)$ and $D_{D(t^*)E(t^*)}$ do not intersect. Analogously, to prove statement~(b) it suffices to show
that $\mathcal{C}_Q(t^*)$ and $D_{F(t^*)A(t^*)}$ do not intersect, and to prove statement~(c) it suffices to show
that $\mathcal{C}_R(t^*)$ and $D_{B(t^*)C(t^*)}$ do not intersect.

\begin{figure}[t]
	\centering
	\includegraphics[scale=0.75,page=13]{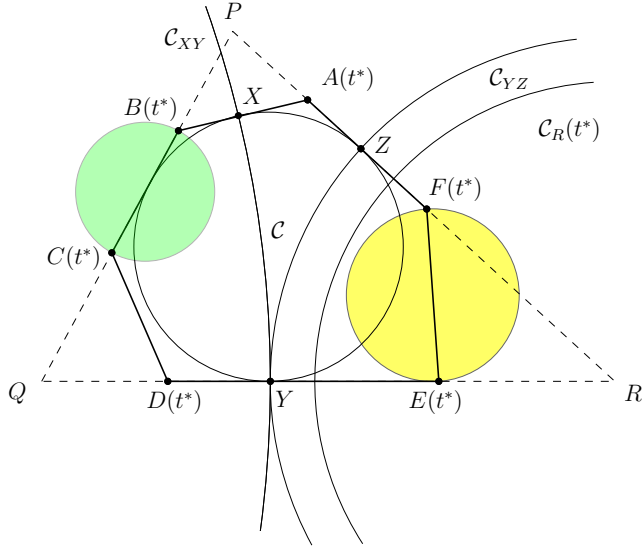}
	\caption{\small{Proof of Lemma~\ref{lem:3pairs}}.}
	\label{fig:3pairs3}
\end{figure}

Observe from the defintion of $t^*$ that $\mathcal{C}$ is tangent to at least one of the segments
$A(t^*)B(t^*)$, $C(t^*)D(t^*)$, and $E(t^*)F(t^*)$. Assume w.l.o.g.\ that $\mathcal{C}$ is tangent
to $A(t^*)B(t^*)$. Let $X$, $Y$, and $Z$ be the points of tangency between $\mathcal{C}$ and
the segments $A(t^*)B(t^*)$, $D(t^*)E(t^*)$, and $F(t^*)A(t^*)$, respectively.
Further assume w.l.o.g.\ that the line $\ell(Q,R)$ is horizontal, and either
the lines $\ell(A(t^*),B(t^*))$ and $\ell(Q,R)$ are parallel or the point $\ell(A(t^*),B(t^*))\cap \ell(Q,R)$ is to the left of $Q$ 
(see Figure~\ref{fig:3pairs3} in which the latter case occurs). In the former case, let
$\mathcal{C}_{XY}$ denote the halfplane with the points in or to the left of the vertical line $\ell(X,Y)$.
In the latter case, let $\mathcal{C}_{XY}$ denote the disk centered at $\ell(A(t^*),B(t^*))\cap \ell(Q,R)$
whose boundary contains $X$ and $Y$. Let $\mathcal{C}_{YZ}$ denote the disk with center $R$ and radius
$\seg{RY}=\seg{RZ}$. By Lemma~\ref{lem:1} and construction, we have both
\[
	D_{B(t^*)C(t^*)} \subset D_{B(t^*)Q} \subset \mathcal{C}_{XY} ~~\text{and}~~
	D_{E(t^*)F(t^*)} \subset \mathcal{C}_R(t^*) \subseteq \mathcal{C}_{YZ},
\]
which implies that $D_{B(t^*)C(t^*)}$ and $D_{E(t^*)F(t^*)}$ do not intersect. Hence, statement~(c)
is satisfied and the lemma follows.
\end{proof}

\begin{figure}[t]
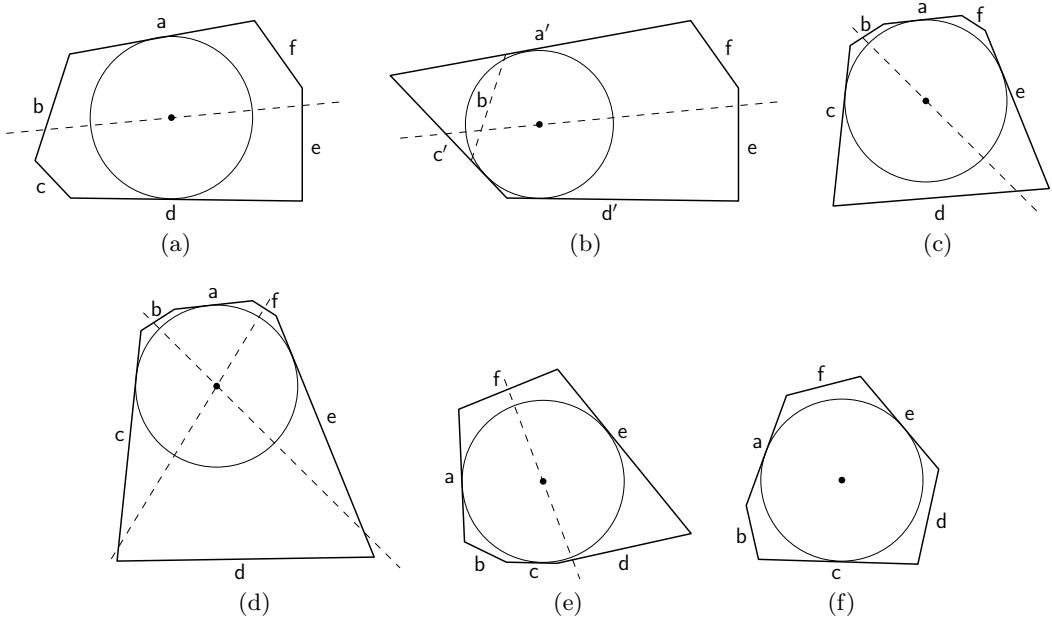

	\centering
	\subfloat[]{
		\includegraphics[scale=0.7,page=16]{img.pdf}
		\label{fig:6disks1}
	}\hspace{0.2cm}
	\subfloat[]{
		\includegraphics[scale=0.7,page=17]{img.pdf}
		\label{fig:6disks2}
	}\hspace{0.2cm}
	\subfloat[]{
		\includegraphics[scale=0.7,page=18]{img.pdf}
		\label{fig:6disks3}
	}\\
	\subfloat[]{
		\includegraphics[scale=0.7,page=19]{img.pdf}
		\label{fig:6disks4}
	}\hspace{0.2cm}
	\subfloat[]{
		\includegraphics[scale=0.7,page=20]{img.pdf}
		\label{fig:6disks5}
	}\hspace{0.2cm}
	\subfloat[]{
		\includegraphics[scale=0.7,page=21]{img.pdf}
		\label{fig:6disks6}
	}
	\caption{\small{
		Proof of Lemma~\ref{lem:no3cycles}.
	}}
	\label{fig:6disks}
\end{figure}

\begin{proof}[Proof of No-3-Cycles lemma (Lemma~\ref{lem:no3cycles})]
By extending $\mathsf{a}$, $\mathsf{b}$, $\mathsf{c}$, $\mathsf{d}$, $\mathsf{e}$, $\mathsf{f}$, we can consider that $\mathsf{a}$, $\mathsf{b}$, $\mathsf{c}$, $\mathsf{d}$, $\mathsf{e}$, $\mathsf{f}$ are the sides of an (unbounded) convex 6-gon $P_6$.
The proof is split into several cases.
Suppose that there exists a disk contained in $P_6$ and tangent to two opposed sides, say w.l.o.g.\ that the disk
is tangent to $\mathsf{a}$ and $\mathsf{d}$. Further assume w.l.o.g.\ that $\mathsf{d}$ is horizontal,
the bisector of $\mathsf{a}$ and $\mathsf{d}$
intersects the side $\mathsf{e}$, and either the lines $\ell(\mathsf{a})$ and $\ell(\mathsf{d})$ are parallel
or the point $\ell(\mathsf{a})\cap \ell(\mathsf{d})$ is to the left of $\mathsf{d}$ (see Figure~\ref{fig:6disks1}).
Using Lemma~\ref{lem:abcx} with a disk tangent
to extensions $\mathsf{a}'$, $\mathsf{c}'$, and $\mathsf{d}'$ of $\mathsf{a}$, $\mathsf{c}$, and $\mathsf{d}$,
respectively, it follows that
$D_{\mathsf{c}}\cap D_{\mathsf{f}}=\emptyset$ (see Figure~\ref{fig:6disks2}). 

The next cases use similar arguments (i.e.\ applying Lemma~\ref{lem:abcx}).
If there does not exist any disk contained in $P_6$ and tangent to two opposed sides,
then there must exist a disk contained in $P_6$ and tangent to three pairwise non-consecutive sides.
Assume w.l.o.g.\ that such a disk is tangent to  $\mathsf{a}$, $\mathsf{c}$, and $\mathsf{e}$. 
If the lines $\ell(\mathsf{a})$, $\ell(\mathsf{c})$, and $\ell(\mathsf{e})$ do not bound a triangle 
that contains $P_6$ (see Figure~\ref{fig:6disks3}), then we proceed as follows. Assume w.l.o.g.\
that either $\ell(\mathsf{c})$ and $\ell(\mathsf{e})$ are parallel or the point $\ell(\mathsf{c}) \cap \ell(\mathsf{e})$
is separated from $P_6$ by $\ell(\mathsf{a})$, as in Figure~\ref{fig:6disks3}. 
If the bisector of $\mathsf{a}$ and $\mathsf{c}$ intersects $\mathsf{d}$ (see Figure~\ref{fig:6disks3}), then $D_{\mathsf{b}} \cap D_{\mathsf{e}} = \emptyset$ by Lemma~\ref{lem:abcx}.
Analogously, if the bisector of $\mathsf{a}$ and $\mathsf{e}$ intersects $\mathsf{d}$, then $D_{\mathsf{c}} \cap D_{\mathsf{f}} = \emptyset$.
Suppose now that neither the bisector of $\mathsf{a}$ and $\mathsf{c}$ intersects $\mathsf{d}$, nor 
the bisector of $\mathsf{a}$ and $\mathsf{e}$ intersects $\mathsf{d}$ (see Figure~\ref{fig:6disks4}).
Then, we have that $D_\mathsf{a}\cap D_\mathsf{d}=\emptyset$, by Lemma~\ref{lem:quad}.
Otherwise, if the lines $\ell(\mathsf{a})$, $\ell(\mathsf{c})$, and $\ell(\mathsf{e})$ do bound a triangle 
that contains $P_6$ (see Figure~\ref{fig:6disks5}), then we proceed as follows. If the bisector of 
$\mathsf{a}$ and $\mathsf{e}$ does not intersect $\mathsf{c}$ (see Figure~\ref{fig:6disks5}), 
say w.l.o.g.\ that it intersects $\mathsf{d}$, then we have $D_\mathsf{c}\cap D_\mathsf{f}=\emptyset$.
Symmetric arguments can be given if the bisector of 
$\mathsf{a}$ and $\mathsf{c}$ does not intersect $\mathsf{e}$, or
the bisector of 
$\mathsf{c}$ and $\mathsf{e}$ does not intersect $\mathsf{a}$.
Otherwise, if the bisector of 
each two sides among $\mathsf{a}$, $\mathsf{c}$, and $\mathsf{e}$
intersects the third one (see Figure~\ref{fig:6disks6}), then
$D_\mathsf{a}\cap D_\mathsf{d}=\emptyset$, or $D_\mathsf{b}\cap D_\mathsf{e}=\emptyset$,
or $D_\mathsf{c}\cap D_\mathsf{f}=\emptyset$, by Lemma~\ref{lem:3pairs}. All the cases are covered, and the lemma follows.
\end{proof}

\section{Part III: Proof of Main theorem (Theorem~\ref{theo:main})}\label{sec:partIII}

Given an (unbounded) convex polygon and two sides $\mathsf{a}$ and $\mathsf{b}$ of it, we define the 
segment (or halfline) $\mathsf{a}|\mathsf{b}$ in the case where $\mathsf{a}$ and $\mathsf{b}$ are consecutive sides, or both are the two halfline sides of the polygon when it is unbounded, as follows: If $\mathsf{a}$ and $\mathsf{b}$ are consecutive segments, then $\mathsf{a}|\mathsf{b}$ is the diagonal of the polygon connecting an endpoint of $\mathsf{a}$ with an endpoint of $\mathsf{b}$. If $\mathsf{a}$ is a halfline and $\mathsf{b}$ is a segment, then $\mathsf{a}|\mathsf{b}$ is the halfline contained in the polygon, parallel to $\mathsf{a}$, and with apex the vertex of $\mathsf{b}$ not in common with $\mathsf{a}$. If both $\mathsf{a}$ and $\mathsf{b}$ are halflines because the polygon is unbounded, then $\mathsf{a}|\mathsf{b}$ is the segment (i.e.\ diagonal) that connects the two endpoints of $\mathsf{a}$ and $\mathsf{b}$.

\begin{lemma}\label{lem:a|b}
Let $P_n$ be an (unbounded) convex $n$-gon, $n\ge 4$, and $\mathsf{a}$ and $\mathsf{b}$ two sides of $P_n$ such that the segment (or halfline) $\mathsf{a}|\mathsf{b}$ is defined. Let $\mathsf{c}$ be another side of $P_n$ such that $D_\mathsf{c}$ intersects both $D_\mathsf{a}$ and $D_\mathsf{b}$. Then, $D_\mathsf{c}$ also intersects $D_{\mathsf{a}|\mathsf{b}}$.
\end{lemma}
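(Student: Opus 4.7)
The aim is to show $D_\mathsf{c} \cap D_{\mathsf{a}|\mathsf{b}} \neq \emptyset$. I would handle first the main case in which $\mathsf{a}$ and $\mathsf{b}$ are consecutive segments of the polygon sharing a common vertex $v$ and meeting opposite vertices $u$, $w$ respectively, so that $\mathsf{a}|\mathsf{b}$ is the diagonal $uw$; the unbounded subcases, where $\mathsf{a}|\mathsf{b}$ is instead a halfline or where one of the relevant disks is a halfplane, proceed by the same argument after reinterpreting the diameter/halfline condition appropriately.

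Translate so that $v = 0$, and pick witnesses $p \in D_\mathsf{c} \cap D_\mathsf{a}$ and $q \in D_\mathsf{c} \cap D_\mathsf{b}$ guaranteed by the hypotheses. The strategy is to exhibit a single point $r \in D_\mathsf{c} \cap D_{\mathsf{a}|\mathsf{b}}$, obtained as a convex combination of $p$ and $q$ (hence automatically in $D_\mathsf{c}$). A short vector computation, equivalent to Apolonio's Theorem~\ref{lem:median}, yields the identity
\[
4(r - u)\cdot(r - w) ~=~ |(p - u) + (q - w)|^2 - |u - w|^2
\]
for $r = (p + q)/2$, so that $r \in D_{uw}$ precisely when $|(p - u) + (q - w)| \le |u - w|$. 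The Thales characterizations of $p \in D_\mathsf{a}$ and $q \in D_\mathsf{b}$ reduce, with $v = 0$, to $(p - u)\cdot p \le 0$ and $(q - w)\cdot q \le 0$; and $p, q \in D_\mathsf{c}$ gives $|p - q| \le \seg{\mathsf{c}}$. These are the algebraic tools available.

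The argument naturally splits on the position of $p$ and $q$ relative to the line $\ell(u, w)$. If either $p$ or (symmetrically) $q$ can be chosen in the open halfplane of $\ell(u, w)$ opposite to $v$, then $\ell(u,w)$ separates that point from $v$, so viewed from the point the vertex $v$ is angularly between $u$ and $w$; hence $\angle upw \ge \angle upv \ge \pi/2$, and by Thales $p \in D_{uw}$, together with $p \in D_\mathsf{c}$ already delivering the conclusion. In the remaining case, both $D_\mathsf{c} \cap D_\mathsf{a}$ and $D_\mathsf{c} \cap D_\mathsf{b}$ lie entirely on the $v$-side of $\ell(u, w)$. This is where the convexity of $P_n$ enters essentially, since $\mathsf{c}$ being a side of $P_n$ distinct from $\mathsf{a}$ and $\mathsf{b}$ forces the whole segment $\mathsf{c}$ to lie on the opposite side of $\ell(u,w)$ from $v$, giving a narrow geometric constraint on $D_\mathsf{c}$. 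Expanding the square in the identity above and combining the two Thales inequalities with $|p - q| \le \seg{\mathsf{c}}$ should yield $|(p-u) + (q-w)|^2 \le |u - w|^2$; this last algebraic step is the main obstacle, because without invoking the convex-polygon geometry the naive midpoint estimate is not always tight enough and one must leverage the confinement of $\mathsf{c}$ to the far side of $\ell(u,w)$ to control the remaining cross terms.
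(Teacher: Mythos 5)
There are two genuine gaps. The first is in your Case~1: from the hypothesis that a witness $p\in D_\mathsf{a}\cap D_\mathsf{c}$ lies in the open halfplane of $\ell(u,w)$ opposite to $v$, you conclude that $v$ is angularly between $u$ and $w$ as seen from $p$, hence $\angle upw\ge\angle upv\ge\pi/2$. That implication is false as stated: being separated from $v$ by the \emph{line} $\ell(u,w)$ only guarantees that the segment $pv$ crosses the line, not the \emph{segment} $uw$, and only the latter places $v$ inside the wedge $\angle upw$. Concretely, take $u=(0,0)$, $v=(1,0)$, $w=(-1,1)$ (a valid counter-clockwise convex chain, in which $\angle wuv>\pi/2$) and $p=(0.02,-0.1)$: then $p\in D_{uv}$ and $p$, $v$ lie on opposite sides of $\ell(u,w)$, yet $\angle upw<\pi/2$ and $p\notin D_{uw}$. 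The implication does hold when $p$ additionally lies in the polygon, because then $pv\subset P_n$ must cross $P_n\cap\ell(u,w)=uw$; this is exactly the extra hypothesis the paper's proof carries along. It cuts off the triangle $R_{\mathsf{a}|\mathsf{b}}=\Delta uvw$, sets $P_{n-1}=P_n\setminus R_{\mathsf{a}|\mathsf{b}}$, and proves the containment $D_\mathsf{a}\cap P_{n-1}\subseteq D_{\mathsf{a}|\mathsf{b}}$ (and likewise for $\mathsf{b}$) by precisely your Thales/angle-monotonicity mechanism. So you have the right tool but attach it to the wrong separating condition.

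The second gap is the one you name yourself: Case~2 is not proved. The inequality $|(p-u)+(q-w)|\le|u-w|$ for $r=(p+q)/2$ does not follow from the three scalar constraints you list, and it is not clear that the midpoint of two arbitrary witnesses lands in $D_{uw}$ at all; since this is the entire content of the hard case, the argument is incomplete. The paper avoids the two-witness/averaging idea altogether: it shows that \emph{every} point of $D_\mathsf{a}$ (or of $D_\mathsf{b}$) lying in $P_{n-1}$ already belongs to $D_{\mathsf{a}|\mathsf{b}}$, so a single suitably located witness suffices, and a witness can always be taken either in $P_{n-1}$ or on the diagonal $uw$ itself (which lies in $D_{uw}$). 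I would therefore redo your case split on where a witness sits relative to the triangle $\Delta uvw$ rather than relative to the line $\ell(u,w)$; your Case~2 then disappears entirely.
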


\begin{figure}[t]
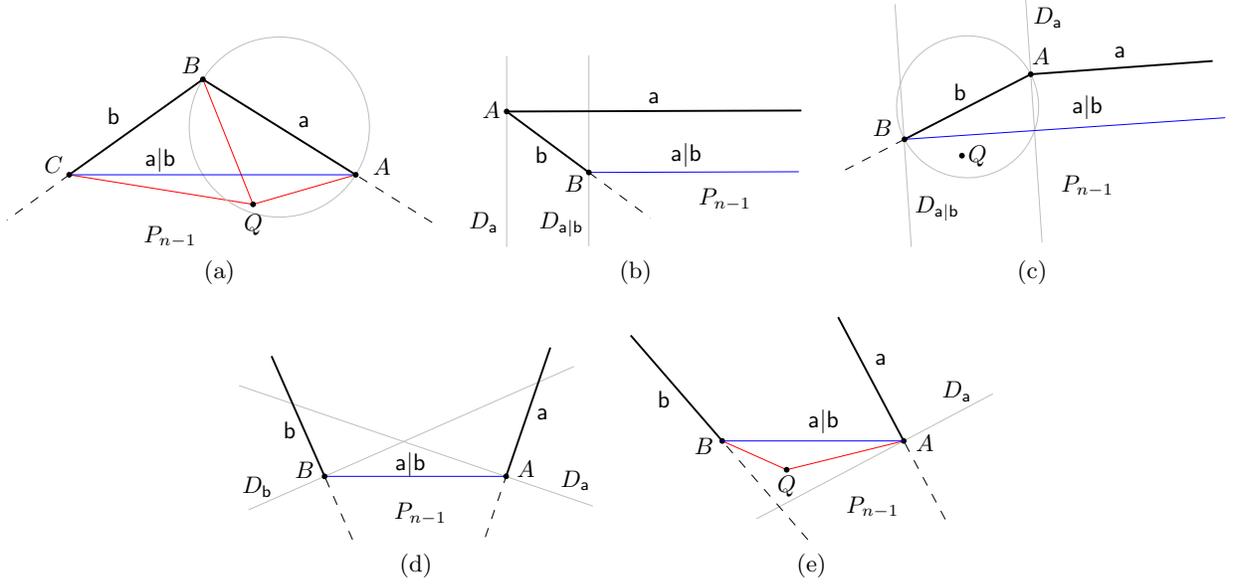

	\centering
	\subfloat[]{
		\includegraphics[scale=0.9,page=27]{img.pdf}
		\label{fig:ab1}
	}\hspace{0.1cm}
	\subfloat[]{
		\includegraphics[scale=0.9,page=35]{img.pdf}
		\label{fig:ab4}
	}\hspace{0.1cm}
	\subfloat[]{
		\includegraphics[scale=0.9,page=28]{img.pdf}
		\label{fig:ab2}
	}\\
	\subfloat[]{
		\includegraphics[scale=0.9,page=36]{img.pdf}
		\label{fig:ab5}
	}\hspace{0.1cm}
	\subfloat[]{
		\includegraphics[scale=0.9,page=29]{img.pdf}
		\label{fig:ab3}
	}
	\caption{\small{
		Proof of Lemma~\ref{lem:a|b}.
	}}
	\label{fig:a|b}
\end{figure}

\begin{proof}
Let $R_{\mathsf{a}|\mathsf{b}}$ be the convex region bounded by $\mathsf{a}$, $\mathsf{b}$, and $\mathsf{a}|\mathsf{b}$, and $P_{n-1}=P_n\setminus R_{\mathsf{a}|\mathsf{b}}$ the (possibly unbounded) convex $(n-1)$-gon resulting from removing $R_{\mathsf{a}|\mathsf{b}}$ from $P_n$. To prove the lemma, it suffices to show the following statement: every point $Q$ in $D_{\mathsf{a}}\cap P_{n-1}$, or $D_{\mathsf{b}}\cap P_{n-1}$, is also in $D_{\mathsf{a}|\mathsf{b}}$.
Assume that $\mathsf{a}$ and $\mathsf{b}$ are segments, so that $\mathsf{a}$ has endpoints $A$ and $B$, $\mathsf{b}$ has endpoints $B$ and $C$, and $\mathsf{a}|\mathsf{b}=AC$. Let $Q$ be a point in $D_{\mathsf{a}}\cap P_{n-1}$ (see Figure~\ref{fig:ab1}). Then, $\angle AQB\ge \pi/2$ by Thales' theorem. Then, we have $\angle AQC>\angle AQB\ge \pi/2$, which implies that $Q$ is also in $D_{\mathsf{a}|\mathsf{b}}$ by Thales' theorem. Analogulsy, if $Q$ is in $D_{\mathsf{b}}\cap P_{n-1}$, then it is also in $D_{\mathsf{a}|\mathsf{b}}$. 
Assume now that $\mathsf{a}$ is a halfline and $b$ is a segment, where $A$ is the apex of $\mathsf{a}$, and $\mathsf{b}$ has endpoints $A$ and $B$ (see Figure~\ref{fig:ab4} and Figure~\ref{fig:ab2}). In this case, $D_{\mathsf{a}|\mathsf{b}}$ is the halfplane containing $\mathsf{a}$ and bounded by the line through $B$ perpendicular to $\mathsf{a}|\mathsf{b}$. 
If $D_{\mathsf{a}|\mathsf{b}}\subseteq D_{\mathsf{a}}$ (see Figure~\ref{fig:ab4}), then 
$P_{n-1}\subset D_{\mathsf{a}|\mathsf{b}}$ and the statement trivially follows.
Otherwise, if $D_{\mathsf{a}}\subset D_{\mathsf{a}|\mathsf{b}}$ (see Figure~\ref{fig:ab2}), then
$D_{\mathsf{b}}\cap P_{n-1}\subset D_{\mathsf{a}|\mathsf{b}}$, and the statement follows. 
Finally, assume that both $\mathsf{a}$ and $\mathsf{b}$ are halflines, with $A$ the apex of $\mathsf{a}$, and $B$ the apex of $\mathsf{b}$ (see Figure~\ref{fig:ab5} and Figure~\ref{fig:ab3}). 
If neither $D_{\mathsf{a}}$ contains $\mathsf{b}$ nor $D_{\mathsf{b}}$ contains $\mathsf{a}$ (see Figure~\ref{fig:ab5}),
then the statement trivially follows. Otherwise, assume w.l.o.g.\ that $D_{\mathsf{a}}$ contains $\mathsf{b}$ (see Figure~\ref{fig:ab3}). Let $Q$ be a point in $(D_{\mathsf{a}}\cup D_{\mathsf{b}}) \cap P_{n-1}$, and note that $\angle AQB\ge\pi/2$ because the boundary of $D_{\mathsf{a}}$ is perpendicular to $\mathsf{a}$, and $\mathsf{a}$ and $\mathsf{b}$ are the halflines among the sides of the unbounded $P_n$. Then, $Q$ belongs to $D_{\mathsf{a}|\mathsf{b}}$ by Thales' theorem,
showing that the statement is true.
\end{proof}

\begin{figure}[t]
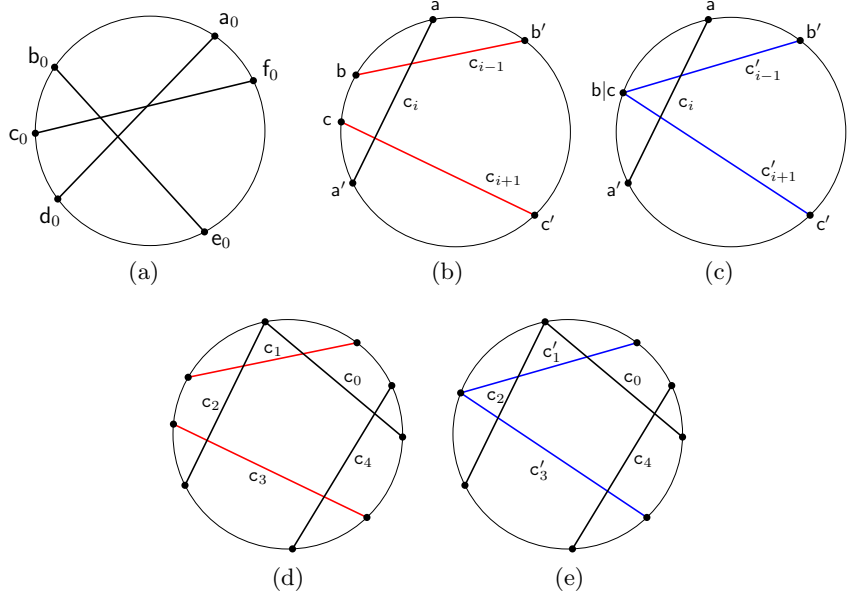

	\centering
	\subfloat[]{
		\includegraphics[scale=0.75,page=30]{img.pdf}
		\label{fig:main1}
	}\hspace{0.2cm}
	\subfloat[]{
		\includegraphics[scale=0.75,page=33]{img.pdf}
		\label{fig:main4}
	}\subfloat[]{
		\includegraphics[scale=0.75,page=34]{img.pdf}
		\label{fig:main5}
	}\\
	\subfloat[]{
		\includegraphics[scale=0.75,page=31]{img.pdf}
		\label{fig:main2}
	}\hspace{0.2cm}
	\subfloat[]{
		\includegraphics[scale=0.75,page=32]{img.pdf}
		\label{fig:main3}
	}
	\caption{\small{
		Proof of Theorem~\ref{theo:main}.
	}}
	\label{fig:main}
\end{figure}

\begin{proof}[Proof of Main theorem (Theorem~\ref{theo:main})]
Let $P_n$ be a convex $n$-gon with $n\ge 3$. Let $G=\langle V,E \rangle$ be the intersection graph of the side disks of $P_n$, and $G_{\mathtt{c}}=\langle V_{\mathtt{c}}, E_{\mathtt{c}}\rangle$ the intersection graph of the chords in the circular embedding of $G$. If $n=3,4$, then $G$ is trivially planar. Thus, assume $n\ge 5$.
Suppose that $G_{\mathtt{c}}$ has a 3-length cycle, made of three pairwise intersecting chords, induced by six sides $\mathsf{a}_0$, $\mathsf{b}_0$, $\mathsf{c}_0$, $\mathsf{d}_0$, $\mathsf{e}_0$, $\mathsf{f}_0$ of $P_n$. Assume w.l.o.g.\ that these sides appear in this order counter-clockwise along the boundary of $P_n$ (see Figure~\ref{fig:main1}). Some (or all) of $\mathsf{a}_0$, $\mathsf{b}_0$, $\mathsf{c}_0$, $\mathsf{d}_0$, $\mathsf{e}_0$, $\mathsf{f}_0$ can be extended to obtain the sides $\mathsf{a}\supseteq \mathsf{a}_0$, $\mathsf{b}\supseteq \mathsf{b}_0$, $\mathsf{c}\supseteq \mathsf{c}_0$, $\mathsf{d}\supseteq \mathsf{d}_0$, $\mathsf{e}\supseteq \mathsf{e}_0$, $\mathsf{f}\supseteq \mathsf{f}_0$ of a possibly unbounded convex 6-gon. By Lemma~\ref{lem:no3cycles}, we have that $D_\mathsf{a}\cap D_\mathsf{d}=\emptyset$, $D_\mathsf{b}\cap D_\mathsf{e}=\emptyset$, or $D_\mathsf{c}\cap D_\mathsf{f}=\emptyset$. This implies that $D_{\mathsf{a}_0}\cap D_{\mathsf{d}_0}=\emptyset$, $D_{\mathsf{b}_0}\cap D_{\mathsf{e}_0}=\emptyset$, or $D_{\mathsf{c}_0}\cap D_{\mathsf{f}_0}=\emptyset$. Hence, 3-length cycles do not exist in $G_{\mathtt{c}}$ by contradiction. 
Let $k\ge 5$ and $c=\langle \mathtt{c}_0,\mathtt{c}_1,\ldots,\mathtt{c}_{k-1},\mathtt{c}_0\rangle$ a minimal cycle of length $k$ in $G_{\mathtt{c}}$, where \DEF{minimal} means that no proper subset of $\{\mathtt{c}_0,\mathtt{c}_1,\ldots,\mathtt{c}_{k-1}\}$ form a cycle. Assume that $\mathtt{c}_0,\mathtt{c}_1,\ldots,\mathtt{c}_{k-1}$ are sorted counter-clockwise (as in Figure~\ref{fig:main2} for $k=5$), and that they define a set of $t$ endpoints, $k\le t \le 2k$.
These endpoints correspond to side disks, and then sides, of $P_n$. Extending some, or all, of such sides we obtain a possibly unbounded convex $t$-gon $P_t$. We have two cases: $t>k$ and $t=k$.
Suppose that $t>k$. In this case, we can select a chord $\mathtt{c}_i$ such that the chords $\mathtt{c}_{i-1}$ and $\mathtt{c}_{i+1}$ that intersect $\mathtt{c}_i$ do not share any endpoint, where subindices are taken modulo $k$ (see Figure~\ref{fig:main4}). Let $\mathsf{a}$ and $\mathsf{a}'$ be the sides of $P_t$ that correspond to the endpoints of $\mathtt{c}_i$, let $\mathsf{b}$ and $\mathsf{b}'$ be the sides that correspond to the endpoints of $\mathtt{c}_{i-1}$, and let $\mathsf{c}$ and $\mathsf{c}'$ be the sides that correspond to the endpoints of $\mathtt{c}_{i+1}$; so that $\mathsf{a}$, $\mathsf{b}$, $\mathsf{c}$, $\mathsf{a}'$ are in this order counter-clockwise along the boundary of $P_t$. Observe that $\mathsf{b}|\mathsf{c}$ is defined, and let $R_{\mathsf{b}|\mathsf{c}}$ be the convex region bounded by $\mathsf{b}$, $\mathsf{c}$, and $\mathsf{b}|\mathsf{c}$, and let $P_{t-1}=P_t\setminus R_{\mathsf{b}|\mathsf{c}}$. For every chord different from $\mathtt{c}_{i-1}$ and $\mathtt{c}_{i+1}$ in the cycle $c$, and sides $\mathsf{z}$ and $\mathsf{z}'$ of $P_{t-1}$ corresponding to its endpoints, we still have in $P_{t-1}$ that $D_{\mathsf{z}}\cap D_{\mathsf{z}'}\neq \emptyset$. Furthermore, for the sides $\mathsf{b}'$ and $\mathsf{c}'$, also of $P_{t-1}, $we have both $D_{\mathsf{b}|\mathsf{c}}\cap D_{\mathsf{b}'}\neq \emptyset$ and $D_{\mathsf{b}|\mathsf{c}}\cap D_{\mathsf{c}'}\neq \emptyset$, by Lemma~\ref{lem:a|b}. This means that in the intersection graph of the chords in the circular embedding of the side disks of $P_{t-1}$ there exists a cycle of length $k$, but the chords of the cycle define a set of endpoints of precisely one less element, that is, $t-1$ endpoints (see the transition from Figure~\ref{fig:main2} to Figure~\ref{fig:main3}). Using this transition from $P_t$ to $P_{t-1}$, we can assume $t=k$ from the beggining and then for every $i\in\{0,1,\ldots,k-1\}$ we have that $\mathtt{c}_{i-1}$ and $\mathtt{c}_{i+1}$ share an endpoint. This condition implies that in $P_t$ every side disk defines at least two chords, which contradicts Lemma~\ref{lem:atmost1}. Hence, the graph $G_{\mathtt{c}}$ is bipartite since it cannot contain cycles of odd length, which implies that $G$ is planar by Theorem~\ref{theo:hamiltonian-planar}.
\end{proof}

\small

\section*{Acknowledgements}

We wish to thank Oswin Aichholzer, Ruy Fabila-Monroy, Thomas Hackl, Tillmann Miltzow, 
Christian Rubio, Eul\`alia Tramuns, Birgit Vogtenhuber, and Frank Duque for inspiring discussions on this topic.
First author was supported by Projects MTM2012-30951 and DGR 2014SGR46.
Second author was supported by projects CONICYT FONDECYT/Iniciaci\'on 11110069 (Chile), 
and Millennium Nucleus Information and Coordination in Networks ICM/FIC RC130003 (Chile).

\small

\bibliographystyle{abbrv}
\bibliography{refs}

\end{document}